\crefname{hypothesis}{Hypothesis}{Hypotheses}
\title{Event-triggered gain scheduling  of  reaction-diffusion
PDEs}
\author{ Iasson Karafyllis\thanks{Department of Mathematics, National Technical University of Athens, Zografou Campus, 15780, Athens, Greece 
  (\email{iasonkar@central.ntua.gr,iasonkaraf@gmail.com}).} \and Nicol\'as Espitia\thanks{Univ. Lille, CNRS, Centrale Lille, UMR 9189 - CRIStAL - Centre de Recherche en Informatique Signal et Automatique de Lille, F-59000 Lille, France. 
  (\email{nicolas.espitia-hoyos@univ-lille.fr}).}
\and Miroslav Krstic \thanks{Department of Mechanical and Aerospace Engineering, University of California, San Diego, La Jolla, CA 92093-0411, USA 
  (\email{krstic@ucsd.edu}).}}
\begin{document}

\maketitle
\begin{abstract}
 This paper deals with the problem of boundary stabilization of  1D  reaction-diffusion
PDEs with a time- and space- varying reaction coefficient. The boundary control design relies on the backstepping approach.  The gains of the boundary control are scheduled under two suitable event-triggered mechanisms. More precisely,  gains are computed/updated on events according to  two state-dependent event-triggering conditions: static-based and dynamic-based conditions,  under  which,   the Zeno behavior is avoided and   well-posedness  as well as  exponential stability of the closed-loop system are guaranteed.  Numerical simulations are presented to illustrate the results.
\end{abstract}

\begin{keywords}
reaction-diffusion systems, backstepping control design, event-triggered sampling,  gain scheduling.
\end{keywords}


\section{Introduction}

Control design of complex systems modeled by partial differential equations (PDEs)  has become a central research area. The  two  traditional ways to act on those complex systems are the \textit{in-domain control} and \textit{boundary control}. For boundary control, \textit{the backstepping method}   has been used as standard tool for designing feedback laws. It has initially emerged to deal with  1D  reaction-diffusion parabolic PDEs in  \cite{Boskovik_kristick_2001_boun_unstable_heat},   \cite{Smyshlyaev-Krstic2004}  and  since then,  the method has been employed to deal with the boundary stabilization of broader classes of PDEs (for an overview  see \cite{krstic2008boundary} and \cite{Meurer2013}). 
One of the most remarkable features of the backstepping approach is the possibility to obtain closed-form analytical solutions for the  kernels of the underlying integral Volterra transformation. Having explicit expressions for the kernels and for  controllers makes implementation simpler and more precise. For instance,  for reaction diffusion systems  with constant parameters,  closed-form solutions for the kernels has been  obtained in terms of  special functions such as the modified Bessel function  \cite{Smyshlyaev-Krstic2004}. When having a time-varying reaction coefficient, a closed-form solution can be obtained through power series for exponential stabilization  \cite{SMYSHLYAEV20051601},  or, in the context of fixed-time stability,   closed-form of time-varying  kernels can been obtained using special functions as   in  \cite{Espitia2018FTSAutomatica}.

Nevertheless, for general reaction diffusion PDEs, obtaining closed-form solutions for the kernels is in general very hard.  For this class of PDEs with  time-and space varying coefficients,  the problem of  boundary stabilization has been very challenging. Time-and space varying reaction coefficients come into play in some applications  such as in trajectory planing and multi-agent systems (see e.g. \cite{MEURER20091182,Freudenthaler2017}), to mention a few. In general, the resulting kernel-PDE is of the form of  hyperbolic spatial operator and first order derivative with respect to time since  the kernel of the Volterra transformation  has to be   time-varying. This brings additional source of complexity to the problem that requires a careful well-posedness analysis and   numerical methods for the solvability.  The solution of the kernel, indeed,  is needed to be  found numerically by  e.g. the method of integral operators  or the so called method of successive approximations  (which  traces back to the seminal work  \cite{Colton1977181}). In this line,  some  contributions have rigorously handled  the well-posedness of time-varying kernels solutions where the reaction term is time and space dependent  as in \cite{MEURER20091182,VasquezTrelatCoron} and some efficient algorithms to better handle the solvability of kernels have been proposed  e.g. in \cite{JADACHOWSKI2012798}.

More recent contributions  focus on  coupled  parabolic PDEs \cite{Orlov_output_coupledparabolicSIAM2017},   with space varying reaction coefficients \cite{Vaszques2017parabolic75,Deutcher2017Solvekernels} and  time-  and space- varying coefficients \cite{Kerschbaum_Deutscher2019} all  of which  able to handle more challenging issues related to the solvability,   suitable choices of target systems,  coupling matrices structure and well-posedness issues in general.\\

In this paper, we aim at stabilizing scalar  1D  reaction–diffusion PDEs with a time- and space- varying reaction coefficient from a  different perspective. Our approach  combines  some ideas from hybrid systems, specifically from the framework of state-dependent switching laws, sampled-data and event-triggered sampling/control strategies.  For an overview of the literature on sampled-data,  event-triggered and switching strategies, we refer to e.g.   \cite{Hetel2017309,Liberzon2003,tabuada2007event,event-triger-Heeme-Johan-Tabu,girard2014dynamic,Postoyan_Aframework_ETS2014,JiangSmallGainETC,Liu_ZPJiang2015}  for finite-dimensional systems  and to  \cite{Logemann2005,Fridman2012826,KARAFYLLIS2018226,Selivanov_FridmanAuto,hante2009modeling},\cite{Tan2009,Prieur2014swithinghyperbolic,POLamare2015,Espitia2016_Aut,Espitia2018TAC,Polyakov2017_heatequ}  for some classes of infinite dimensional systems.

Having said that, the main ideas in this paper state that instead of handling a time-varying kernel capturing the time- and space- varying coefficient, we use a simpler kernel capturing only  the spatial variation of the reaction coefficient. This is possible  on the basis that  the reaction coefficient is sampled in time, thus the kernel-PDE reduces to a form involving space-varying coefficient  only  between two successive sampling times. In order to determine the time instants,  we introduce  event-triggered mechanisms that form an increasing sequence of triggering times (or time of events). At those event times,   kernels are computed/updated in aperiodic fashion and only when needed. In other words, kernels for the control are scheduled according to some event triggering condition (state-dependent law).  Doing so, the approach constitutes a kind of a gain scheduling strategy  suggesting then the adopted name for our approach: \textit{event-triggered gain scheduling.}
   
Sampling in time the reaction coefficient    introduces an error  (called error when sampling) that is reflected in the target system after transformation. This requires  the study of well-posedness issues, ISS properties (\cite{Karafyllis2019_book}) and the exponential stability of the closed loop system when the control gains are scheduled according to the event-triggered mechanisms. In this paper we propose two strategies: the first one  relies on a static triggering condition which takes into account the effect of the error when sampling after transformation and the current state of the closed-loop system. The second strategy relies on a dynamic triggering condition which makes uses of a dynamic variable that can be seen as a filtered version of the static triggering condition. Moreover, under the two proposed strategies, the avoidance of the so called  Zeno phenomenon is proved.  Hence, we can guarantee the well-posedness   as well as the  exponential stability of the closed-loop system.

 The paper is organized as follows. In Section \ref{section_problem_form}, we introduce the class of reaction-diffusion parabolic systems, the control design which includes the introduction of the event-triggered strategies for gain scheduling and the notion of existence and uniqueness of solutions. Section \ref{section:main-results}  provides  the main results which include the avoidance of the Zeno phenomenon, the well-posedness of the closed-loop system and the exponential stability result.   Section~\ref{numerical_simulation} provides a numerical example to illustrate the main results.  Finally, conclusions and perspectives are given in Section~\ref{conslusion_and_perspect}. The Appendix contains the proof of an auxiliary result.

\paragraph*{Notation}
$ \mathbb{R}_{+}$ will denote the set of nonnegative real numbers.   Let $S \subseteq \mathbb{R}^n$ be an open set and let $A \subseteq  \mathbb{R}^n$ be a set that satisfies $S \subseteq  A \subseteq  \bar{S}$. By $C^0(A;\Omega)$, we denote the class of continuous functions on $A$, which take values in $\Omega \subseteq  \mathbb{R}$. By $C^k(A;\Omega)$, where $k\geq 1$ is an integer, we denote the class of functions on $A$, which takes values in $\Omega$ and has continuous derivatives of order $k$. In other words, the functions of class $C^k(A;\Omega)$ are the functions which have continuous derivatives of order $k$ in $S=int(A)$ that can be continued continuously to all points in $\partial S \cap A$.  $L^2(0,1)$ denotes the equivalence class of Lebesgue measurable functions $f,g: [0,1] \rightarrow \mathbb{R}$ for which  $\Vert f\Vert =\left(\int_{0}^{1}  \vert f(x)\vert ^{2}dx\right)^{1/2} < \infty$ and  with inner product 	$\left<f,g\right> = \int_{0}^{1}   f(x)g(x)dx$.    Let $u:\mathbb{R}_{+}\times [0,1] \rightarrow \mathbb{R}$ be given. $u[t]$ denotes the profile of $u$ at certain $t\geq 0$, i.e. $(u[t])(x) = u(t,x)$, for all $x \in [0,1]$. For an interval $I \subseteq \mathbb{R}_{+}$, the space $C^{0}(I;L^2(0,1))$ is the space of continuous mappings $I\ni t  \mapsto  u[t] \in L^2(0,1)$. $H^2(0,1)$  denotes the Sobolev space of functions $f \in L^2(0,1)$ with square integrable (weak) first and second-order derivatives $f^{'}(\cdot), f^{''}(\cdot) \in L^2(0,1)$. 

\section{Problem description and control design}\label{section_problem_form}

Consider the following  scalar reaction-diffusion system with  time- and space- varying  reaction coefficient:
\begin{eqnarray}\label{eq:sysparabolic0}
u_t(t,x) & =&  \varepsilon u_{xx}(t,x) + \lambda(t,x) u(t,x) \\
u_x(t,0)&=&qu(t,0)\\
u(t,1)&=& U(t), \quad \text{or} \quad u_x(t,1)=U(t)   \label{BC_parabolic_PDE_u0}
\end{eqnarray} 
and initial condition:
\begin{equation}\label{IC_parabolic_PDE_u0}
 u(0,x)=u_{0}(x)
\end{equation}
where $\varepsilon >0$, $ q \in (-\infty, + \infty]$ (the case  $q=+\infty$ is interpreted as the Dirichlet case) and   $\lambda \in  \mathcal{C}^{0}(\mathbb{R}_{+}\times [0,1])$ with $\lambda[t] \in \mathcal{C}^{1}([0,1])$. $u: [0,\infty)\times[0,1]  \rightarrow \mathbb{R} $ is the system state  and  
   $U(t) \in \mathbb{R}$ is the control input. 
%
We assume that  $\lambda \in \mathcal{C}^{0}(\mathbb{R}_{+}\times [0,1])$ is bounded, i.e. there exists a constant $\bar{\lambda} > 0$ such that
\begin{equation}\label{eq:bound_lambda-tx}
\vert \lambda(t,x) \vert \leq \bar{\lambda}, \quad \forall t\geq 0, \quad x \in [0,1]
\end{equation}
Moreover, we assume the   following:
\begin{assumption}\label{Assumption_H}
 There exists a constant $\varphi > 0$  such that the following inequality holds:
\begin{equation}\label{Hypotesis_slowvarying}
 \vert \lambda(t,x) - \lambda(s,x) \vert \leq \varphi \vert t-s \vert, \quad \forall x \in [0,1], \quad t,s \geq 0
\end{equation}
\end{assumption}
Assumption \ref{Assumption_H} means that the reaction coefficient is Lipschitz with respect to time with Lipschitz constant $\varphi$. The constant $\varphi$ is a quantity that depends on the rate of change of the reaction coefficient: a high rate of change of the reaction coefficient implies a large value for $\varphi$. All subsequent results are also valid (with some modifications) if Assumption \ref{Assumption_H} is replaced by the less demanding assumption of Hölder continuity instead of Lipschitz continuity, i.e., if we replace the right hand side of \eqref{Hypotesis_slowvarying} by $\varphi|t-s|^a$, where $a \in (0,1)$ is a constant. However, for simplicity we will restrict the presentation of the results to the Lipschitz case.\\

In what follows, we do not consider system \eqref{eq:sysparabolic0}-\eqref{BC_parabolic_PDE_u0} as a time-varying system but we consider system \eqref{eq:sysparabolic0}-\eqref{BC_parabolic_PDE_u0} as a time-invariant system with two inputs: the control input $U(t)$ and the time-varying distributed parameter $\lambda[t]$. This is very important for theoretical reasons: since system \eqref{eq:sysparabolic0}-\eqref{BC_parabolic_PDE_u0} is not a time-varying system we can always assume that the initial time is zero. Therefore, the proposed event-triggered control scheme may be seen as a feedforward control scheme that compensates the effect of the distributed disturbance input $\lambda[t]$.

\subsection{Backstepping control design}
Our aim is the global exponential stabilization of the system  \eqref{eq:sysparabolic0}-\eqref{BC_parabolic_PDE_u0} at zero using boundary control.
To that end, we  follow the backstepping approach which makes uses of an invertible Volterra transformation to map the system into a target system simpler to handle and with desired stability properties.   Since the reaction coefficient is both time and space varying,  typically, the kernels of the transformation have to be chosen  to depend on time. This brings an additional source of complexity since  the resulting  kernel PDE equation contains a time-derivative of the kernel and involves  the time and space varying coefficient. Overall, the problem is much harder to  solve but has been the object of extensive investigation since the seminal work \cite{Colton1977181}.  Numerical strategies such as the method of successive approximation have been widely  employed (see e.g. \cite{MEURER20091182} and \cite{VasquezTrelatCoron}). 
 
Our approach takes a  different direction. We avoid solving  a  kernel-PDE hyperbolic spatial operator and first order derivative with respect to time  capturing the reaction coefficient (dependent on both time and space). We use simpler kernels for the control under which we  are still able to stabilize exponentially the closed-loop system. This brings  some degree of robustness to the controller.
Inspired by event-triggered control strategies (in both finite and infinite-dimensional settings), the key idea of our approach is to schedule the kernel gain at a certain increasing sequence of times. More precisely  the  computation and updating of the kernel are on events and only when needed. 
 The time instants are determined by  event-triggered mechanisms that form an increasing sequence$\{t_j\}_{j\in \mathbb{N}}$ with $t_0=0$  which will be characterized  later on. 
 
Let $\{t_j\}_{j\in \mathbb{N}}$ be an increasing sequence of times with $t_0=0$ and define:
\begin{equation}\label{eq:space-varying_reaction_coeff}
b_j(x):=\lambda(t_j,x), \quad \text{for} \quad  x\in [0,1]
\end{equation}
which is the sampled version of the reaction coefficient $\lambda(t,x)$. We define also  the error when sampling:
\begin{equation}\label{error_when_sampling}
e_j(t,x):= \lambda(t,x) -b_j(x), \quad \text{for} \quad  t \in [t_j,t_{j+1}), \quad  x\in [0,1]
\end{equation}
 Therefore, we can rewrite  \eqref{eq:sysparabolic0}-\eqref{BC_parabolic_PDE_u0}, for $t\in [t_j,t_{j+1})$ as follows:
\begin{eqnarray}
u_t(t,x) & =&  \varepsilon u_{xx}(t,x) + b_j(x) u(t,x) +  e_j(t,x)  u(t,x)  \label{eq:sysparabolic1}   \\
u_x(t,0)&=&q u(t,0)\\
u(t,1)&=& U(t), \quad \text{or} \quad u_x(t,1)=U(t)   \label{BC_parabolic_PDE_u1}
\end{eqnarray}
The backstepping boundary control design is performed by transforming \eqref{eq:sysparabolic1}-\eqref{BC_parabolic_PDE_u1} into a target system which will reflect of the error when sampling $e_j(t,x)$ \eqref{error_when_sampling}.
 Therefore,  consider the following  invertible Volterra transformation, for $j\geq 0$, 
\begin{equation}\label{backstepping _direct_trasf_1}
\begin{split}
w_j(t,x) &=  u(t,x) - \int_{0}^{x} K_j(x,y)u(t,y) dy := (\mathcal{K}_j u[t])(x)
\end{split}
\end{equation}
whose inverse is given by
\begin{equation}\label{backstepping _inverse_trasf_1}
\begin{split}
u(t,x) &=  w_j(t,x) + \int_{0}^{x} L_j(x,y)w(t,y) dy  := (\mathcal{L}_j w_j[t])(x)
\end{split}
\end{equation}
with kernels $K_j$,  $L_j \in \mathcal{C}^{2}(\mathcal{T})$  evolving in   a triangular domain given by $\mathcal{T}= \{ (x,y): 0 \leq y < x \leq 1 \}$ and satisfying \cite[Theorem 2]{Smyshlyaev-Krstic2004}: 
\begin{equation}\label{kernelPDEntimesn-equidifussivityComponentform}
K_{j,xx}(x,y)  - K_{j,yy}(x,y)    =  \frac{(b_j(y) + c)}{\varepsilon}K_j(x,y)  
\end{equation}
\begin{equation}\label{kernelPDEntimesn-equidifussivityComponentformBC1}
K_{jy}(x,0)=qK_{j}(x,0)   \hskip 1.3 cm
\end{equation}
\begin{equation}\label{kernelPDEntimesn-equidifussivityComponentformBC2}
K_j(x,x) = - \frac{1}{2\varepsilon}\int_{0}^{x}(b_j(s)+c)ds \hskip 0.8 cm
\end{equation}
\begin{equation}\label{kernelPDEntimesn-equidifussivityComponentform_inverse}
L_{j,xx}(x,y)  - L_{j,yy}(x,y)    =  -\frac{(b_j(x) + c)}{\varepsilon}L_j(x,y)  
\end{equation}
\begin{equation}\label{kernelPDEntimesn-equidifussivityComponentformBC1_inverse}
L_{jy}(x,0)= q L_{j}(x,0)\hskip 1.3 cm
\end{equation}
\begin{equation}\label{kernelPDEntimesn-equidifussivityComponentformBC2_inverse}
L_j(x,x) = - \frac{1}{2\varepsilon}\int_{0}^{x}(b_j(s)+c)ds \hskip 0.8 cm
\end{equation}
Under \eqref{backstepping _direct_trasf_1}, \eqref{kernelPDEntimesn-equidifussivityComponentform}-\eqref{kernelPDEntimesn-equidifussivityComponentformBC2} and selecting  the control $U(t)$ to satisfy
\begin{equation}\label{eq:control_feedback}
U(t) = \int_{0}^{1}K_{j}(1,y)u(t,y)dy, \quad t \in (t_j, t_{j+1}) 
\end{equation}
for Dirichlet actuation  or by
\begin{equation}\label{eq:control_feedback_Neumann actuation}
U(t) =  K_j(1,1)u(t,1) +  \int_{0}^{1}K_{j,x}(1,y)u(t,y)dy, \quad t \in (t_j, t_{j+1})
\end{equation}
for  Neumann actuation,   the  transformed system, for all $ j\geq 0 $,  $t \in (t_j,t_{j+1})$, is as follows:
\begin{eqnarray}
w_{j,t}(t,x) & = & \varepsilon w_{j,xx}(t,x) - c w_{j}(t,x) +  (\mathcal{K}_j f_j[t]) (x) \label{eq:target_system}  \\
w_{jx}(t,0)&=& qw_j(t,0)\\
w_{j}(t,1)&=&0, \quad \text{or}  \quad w_{jx}(t,1)=0 \  \label{eq:target_BC} 
\end{eqnarray} 
where 
\begin{equation}\label{functionalF1}
f_j(t,x) := e_j(t,x)u(t,x)
\end{equation}
and $c$ is a design parameter which is chosen as $c \geq \varepsilon \bar{q}^2$ (for Dirichlet actuation) or $c \geq \varepsilon \bar{q}^2 + \varepsilon/2$ (for Neumann actuation) where $\bar{q}=\max\{0,-q \}$ (see \cite{Smyshlyaev-Krstic2004}). 

Moreover, the following estimate holds, for all $j \geq 0$
\begin{equation}\label{eq:estimates_kernels}
\max\left\lbrace \vert K_j(x,y) \vert, \vert L_j(x,y) \vert \right\rbrace \leq M \exp(2M x), \quad \text{for} \quad (x,y)\in \mathcal{T}
\end{equation}
where $M:=\frac{\bar{\lambda} + c}{\varepsilon}$.

Definitions  \eqref{backstepping _direct_trasf_1},\eqref{backstepping _inverse_trasf_1}  imply  the following estimates, for all $j\geq 0$:
\begin{equation}\label{eq:estimate_Kj}
\Vert (\mathcal{K}_j u[t]) \Vert \leq \tilde{K}_j \Vert u[t] \Vert
\end{equation}
\begin{equation}\label{eq:estimate_Lj}
\Vert (\mathcal{L}_j w_j[t]) \Vert \leq \tilde{L}_j \Vert w[t] \Vert
\end{equation}
where $\tilde{K}_j$ and $\tilde{L}_j$ are defined, respectively by $\tilde{K}_j:= 1+ \left(\int_{0}^{1} \left( \int_{0}^{x}\vert K_j(x,y) \vert^2 dy \right) dx \right)^{1/2} $ and $ \tilde{L}_j:= \;  1+ \left(\int_{0}^{1} \left( \int_{0}^{x}\vert L_j(x,y) \vert^2 dy \right) dx \right)^{1/2}$. \newline In addition, inequality \eqref{eq:estimates_kernels} implies  the existence of a constant $G>0$ such that:
\begin{equation}\label{estimates_transformations_G}
\max \left\lbrace  \tilde{K}_j,  \tilde{L}_j \right\rbrace \leq G, \quad \text{for} \quad j \geq 0
\end{equation}
where
\begin{equation}\label{eq:G}
G:=1+\sqrt{\frac{\bar{\lambda}+c}{4\varepsilon}\left(\exp\left(\frac{4(\bar{\lambda}+c)}{\varepsilon}\right) -1 \right)}
\end{equation}
which is independent of $j$.

\subsection{Well-posedness analysis}
In order to study  well-posedness issues for \newline \eqref{eq:target_system}-\eqref{eq:target_BC}  and in turn for  \eqref{eq:sysparabolic1}-\eqref{BC_parabolic_PDE_u1} (by virtue of the bounded invertibility the backstepping transformations \eqref{backstepping _direct_trasf_1}-\eqref{backstepping _inverse_trasf_1}) under any  event-triggered  gain scheduling implementation,  we need first  a more general result (see Theorem \ref{Theore:notion_of_solution_general_setting} below)  which  establishes the   notion of solution for the following reaction-diffusion  system:  
\begin{equation}\label{eq:parabolic_pDE_general_setting_notion_solution}
\begin{aligned}
& w_t(t, x)=\varepsilon  w_{xx}(t, x)-c w(t, x)+(\mathcal{F}(t) w[t])(x),  \quad x \in(0,1)\\
&a_{0} w(t, 0)+b_{0} w_x(t, 0)=a_{1} w(t, 1)+b_{1} w_ x(t, 1)=0
\end{aligned}
\end{equation}
where $\varepsilon>0, c, a_{0}, b_{0}, a_{1}, b_{1}$ are constants with $a_{0}^{2}+b_{0}^{2}>0, a_{1}^{2}+b_{1}^{2}>0$ and for each $t \geq 0$
the operator $\mathcal{F}(t): L^{2}(0,1) \rightarrow L^{2}(0,1)$ is a linear bounded operator for which there exist constants $\Omega_1, \Omega_2>0$ such that
\begin{equation}\label{eq:constant_bound_1_operatorF}
 \Vert  \mathcal{F}[t]  \Vert \leq \Omega_1, \text { for all } t \geq 0
\end{equation}
\begin{equation}\label{eq:constant_bound_2_operatorF}
\Vert \mathcal{F}(t)-\mathcal{F}(s)\Vert \leq \Omega_2|t-s|, \text { for all } t, s \geq 0
\end{equation}
 \begin{theorem}\label{Theore:notion_of_solution_general_setting}
 For every initial condition  $w_{0} \in L^{2}(0,1)$ and $T>0,$ the initial-boundary value problem \eqref{eq:parabolic_pDE_general_setting_notion_solution} with
\begin{equation}\label{eq:initial_condition_general_seeting_notion_solution}
w[0]=w_{0}
\end{equation}
has a unique solution $w \in C^{0}\left([0,T];L^{2}(0,1)\right) \cap C^{1}\left((0, T) ; L^{2}(0,1)\right)$ with $w[t] \in D$ for all $t \in(0, T),$ where
\begin{equation*}
D:=\left\{f \in H^{2}(0,1): a_{0} f(0)+b_{0} f^{\prime}(0)=a_{1} f(1)+b_{1} f^{\prime}(1)=0\right\}
\end{equation*}
that satisfies \eqref{eq:initial_condition_general_seeting_notion_solution}  and \eqref{eq:parabolic_pDE_general_setting_notion_solution} for all $t\in (0,T)$.
\end{theorem}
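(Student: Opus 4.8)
The natural approach is to treat \eqref{eq:parabolic_pDE_general_setting_notion_solution} as a semilinear (in fact, linear with a time-dependent bounded perturbation) evolution equation on the Hilbert space $L^2(0,1)$ and apply $C_0$-semigroup theory together with a fixed-point argument. First I would define the operator $A_0 f := \varepsilon f'' - c f$ with domain $D$, and recall the standard facts: $A_0$ is self-adjoint, generates an analytic $C_0$-semigroup $\{e^{A_0 t}\}_{t\ge 0}$ on $L^2(0,1)$, and (choosing $c$ large, or without loss of generality after a shift) the semigroup is bounded, $\|e^{A_0 t}\| \le e^{-\mu t}$ for some $\mu$. Then the problem \eqref{eq:parabolic_pDE_general_setting_notion_solution}--\eqref{eq:initial_condition_general_seeting_notion_solution} is recast as the integral (mild-solution) equation
\begin{equation*}
w[t] = e^{A_0 t} w_0 + \int_0^t e^{A_0(t-\tau)} \mathcal{F}(\tau) w[\tau]\, d\tau .
\end{equation*}

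Second, I would establish existence and uniqueness of a mild solution $w \in C^0([0,T];L^2(0,1))$ by a contraction-mapping argument on $C^0([0,\delta];L^2(0,1))$ for small $\delta$, using the uniform bound \eqref{eq:constant_bound_1_operatorF} on $\|\mathcal{F}(t)\|$; since the estimate is linear in $w$, the local solution extends to all of $[0,T]$ by iteration (or equivalently via a Gronwall estimate, which also yields the a priori bound $\|w[t]\| \le e^{\Omega_1 t}\|w_0\|$). Uniqueness follows from the same Gronwall inequality applied to the difference of two solutions.

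Third — and this is where the hypotheses \eqref{eq:constant_bound_1_operatorF}--\eqref{eq:constant_bound_2_operatorF} really earn their keep — I would upgrade the mild solution to a classical (strong) solution, i.e. show $w[t] \in D$ for $t>0$, $w \in C^1((0,T);L^2(0,1))$, and that the PDE holds pointwise in $t$. The key input is the analyticity of the semigroup: $e^{A_0 t}$ maps $L^2$ into $D(A_0)=D$ for every $t>0$ with $\|A_0 e^{A_0 t}\| \le C/t$, and the map $t \mapsto \mathcal{F}(t) w[t]$ is continuous (indeed locally Lipschitz/Hölder in $t$, by \eqref{eq:constant_bound_2_operatorF} combined with the continuity of $w[\cdot]$ just proved). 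Standard parabolic regularity for analytic semigroups with a Hölder-continuous inhomogeneity (e.g. Pazy, or Henry) then gives that the convolution term $\int_0^t e^{A_0(t-\tau)}\mathcal{F}(\tau)w[\tau]\,d\tau$ is continuously differentiable in $t$ for $t>0$, takes values in $D(A_0)$, and solves the abstract ODE $\dot w = A_0 w + \mathcal{F}(t)w[t]$ classically; together with $e^{A_0 t}w_0 \to w_0$ in $L^2$ as $t\downarrow 0$ this yields $w \in C^0([0,T];L^2)\cap C^1((0,T);L^2)$ with $w[t]\in D$ on $(0,T)$. Finally, I would note that a classical solution in this sense is in particular a mild solution, so the uniqueness already established applies.

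The main obstacle is the third step: obtaining the Hölder-in-time continuity of the forcing term $\tau \mapsto \mathcal{F}(\tau)w[\tau]$ that the analytic-semigroup regularity theory requires. This is not immediate from \eqref{eq:constant_bound_2_operatorF} alone, since one also needs continuity of $\tau\mapsto w[\tau]$, which is only the weak (mild-solution) regularity at that point; one must interleave the bootstrap carefully — first get $w\in C^0$, then deduce local Hölder continuity of the forcing from \eqref{eq:constant_bound_1_operatorF}, \eqref{eq:constant_bound_2_operatorF} and a local Hölder estimate on the mild solution (itself a consequence of the smoothing $\|(e^{A_0 h}-I)e^{A_0 t}\|\le C h^\theta t^{-\theta}$), and only then invoke the classical regularity theorem. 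Alternatively, one can sidestep part of this by a Galerkin/eigenfunction-expansion argument: expand in the orthonormal basis of eigenfunctions $\{\phi_n\}$ of $A_0$ (which exists since $A_0$ is self-adjoint with compact resolvent), write the solution as $w(t,x)=\sum_n w_n(t)\phi_n(x)$, solve the resulting infinite system of scalar linear ODEs with bounded time-varying coupling, and verify the regularity by estimating the tails using the eigenvalue decay $\lambda_n \sim -\varepsilon n^2\pi^2$; I expect the appendix of the paper takes essentially this route, but either path hinges on the same point — trading analytic smoothing against the modulus of continuity of $\mathcal{F}(\cdot)$.
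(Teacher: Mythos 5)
Your proposal is correct and follows essentially the same route as the paper's appendix: recast the problem as an abstract evolution equation for the (shifted) Sturm--Liouville generator, obtain a unique mild solution in $C^{0}([0,T];L^{2}(0,1))$, and then bootstrap to a classical solution by first deducing local H\"older continuity of the mild solution from analytic smoothing and then of the forcing $t\mapsto \mathcal{F}(t)w[t]$ from \eqref{eq:constant_bound_2_operatorF}, exactly the chain the paper implements via Theorem~1.2 (p.~184), Theorem~3.1 (p.~110) and Corollary~3.3 (p.~113) of Pazy. The only cosmetic difference is that the paper performs the shift $w[t]=\exp(kt)y[t]$ with $k$ chosen so that a fixed-point-free a priori bound closes, rather than your contraction/Gronwall argument, and it does not use the eigenfunction-expansion alternative you sketch.
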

\begin{proof}
See Appendix \ref{Proof_Theorem_well_posedness}.
\end{proof}

We are in position to   specialize the well-posedness result  to the system \eqref{eq:target_system}-\eqref{eq:target_BC} so as  we can construct the solution  by the step method. To do so, let us take in \eqref{eq:initial_condition_general_seeting_notion_solution}  $a_0=q$, $b_0=-1$ for  $q < + \infty$; $a_0=1$, $b_0=0$ for $q= + \infty $,   $a_1=1$, $b_1=0$ for Dirichlet actuation and    $a_1=0$,  $b_1=1$ for Neumann actuation. In addition, it suffices to observe  that the operator $(\mathcal{K}_{j} f_j[t])(x)$ in \eqref{eq:target_system} has the form of  $(\mathcal{F}(t)w[t])(x)$, i.e,
\begin{equation} \label{eq:operator_F_with_operatorTarget-system}
(\mathcal{F}(t)w[t])(x) = (\mathcal{K}_{j} (e_j[t] \mathcal{L}_{j}w_j[t]))(x), \quad t \in [t_j,t_{j+1})
\end{equation}
which is indeed the case   by virtue of  \eqref{error_when_sampling}, \eqref{backstepping _direct_trasf_1}, \eqref{backstepping _inverse_trasf_1} and \eqref{functionalF1}. Moreover, due to  \eqref{Hypotesis_slowvarying} in Assumption \ref{Assumption_H}, the operator $\mathcal{F}(t)$ satisfies \eqref{eq:constant_bound_1_operatorF}-\eqref{eq:constant_bound_2_operatorF}.   Extending continuously the operator $\mathcal{F}(t)$ defined by \eqref{eq:operator_F_with_operatorTarget-system} for $t\geq t_{j+1}$ and using Theorem \ref{Theore:notion_of_solution_general_setting} we obtain the following proposition:

\begin{proposition}\label{Proposition:existance_of_solution_target}
 For every initial data  $w_{j}[t_j]=(\mathcal{K}_{j}u[t_j])(x) \in L^{2}(0,1)$, there exists a unique function   $w_j \in C^{0}\left(\left[t_{j}, t_{j+1}\right] ; L^{2}(0,1)\right) \cap C^{1}\left((t_{j}, t_{j+1}); L^{2}(0,1)\right)$ with $w_j[t] \in D$ for $t \in (t_j,t_{j+1}]$  that satisfies \eqref{eq:target_system}-\eqref{eq:target_BC} for all $t\in (t_j,t_{j+1})$  where
$D \subset H^2([0,1])$ is the set of functions $f : [0,1] \rightarrow \mathbb{R} $ for which one has $f^{'}(0)=q f(0) $ and $f(1)=0$ for the case of Dirichlet  actuation  or  $f^{'}(1)=0$ for the case of Neumann actuation.
\end{proposition}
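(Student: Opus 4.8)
The plan is to obtain Proposition~\ref{Proposition:existance_of_solution_target} as a direct corollary of Theorem~\ref{Theore:notion_of_solution_general_setting}, by checking that on the interval $[t_j,t_{j+1})$ the operator driving the target system \eqref{eq:target_system} fits the abstract framework \eqref{eq:parabolic_pDE_general_setting_notion_solution}, after which only a time shift and a choice of the boundary constants $a_0,b_0,a_1,b_1$ are needed. First I would fix $j\geq 0$ and, exploiting the time-invariance of \eqref{eq:sysparabolic0}--\eqref{BC_parabolic_PDE_u0} emphasized before, reset the initial time to $t_j$, i.e.\ work with the shifted operator $\tau\mapsto\mathcal{F}(t_j+\tau)$, $\tau\geq 0$. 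Using \eqref{eq:operator_F_with_operatorTarget-system} I would write $\mathcal{F}(t)=\mathcal{K}_j\circ\mathcal{M}_{e_j[t]}\circ\mathcal{L}_j$, where $\mathcal{M}_{e_j[t]}$ denotes pointwise multiplication by the function $x\mapsto e_j(t,x)$. Since $\mathcal{K}_j,\mathcal{L}_j:L^2(0,1)\to L^2(0,1)$ are bounded Volterra operators (with $\mathcal{C}^2$ kernels, cf.\ \eqref{eq:estimate_Kj}--\eqref{eq:estimate_Lj}) and $e_j[t]\in\mathcal{C}^0([0,1])$ is bounded, $\mathcal{F}(t)$ is a bounded linear operator on $L^2(0,1)$.

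Next I would verify the two quantitative bounds. From \eqref{eq:estimate_Kj}, \eqref{eq:estimate_Lj}, \eqref{estimates_transformations_G} and the pointwise bound $|e_j(t,x)|=|\lambda(t,x)-\lambda(t_j,x)|\leq 2\bar{\lambda}$ coming from \eqref{eq:bound_lambda-tx}, one gets $\Vert\mathcal{F}(t)\Vert\leq \tilde{K}_j\big(\max_{x\in[0,1]}|e_j(t,x)|\big)\tilde{L}_j\leq 2\bar{\lambda}\,G^2=:\Omega_1$. For the Lipschitz-in-time property, I would observe that $e_j(t,x)-e_j(s,x)=\lambda(t,x)-\lambda(s,x)$, so that by Assumption~\ref{Assumption_H}, $\Vert\mathcal{F}(t)-\mathcal{F}(s)\Vert=\Vert\mathcal{K}_j\circ\mathcal{M}_{e_j[t]-e_j[s]}\circ\mathcal{L}_j\Vert\leq \tilde{K}_j\,\big(\max_{x\in[0,1]}|\lambda(t,x)-\lambda(s,x)|\big)\,\tilde{L}_j\leq \varphi\,G^2|t-s|=:\Omega_2|t-s|$, valid for $t,s\in[t_j,t_{j+1})$. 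Since $\mathcal{F}(t)$ is so far defined only on $[t_j,t_{j+1})$, I would extend it continuously to all of $\mathbb{R}_+$ by freezing, e.g.\ $\mathcal{F}(t):=\mathcal{K}_j\circ\mathcal{M}_{\lambda[t_{j+1}]-b_j}\circ\mathcal{L}_j$ for $t\geq t_{j+1}$ and $\mathcal{F}(t):=\mathcal{F}(t_j)$ for $t<t_j$; this extension remains bounded and Lipschitz with the same constants $\Omega_1,\Omega_2$, again by \eqref{eq:bound_lambda-tx} and Assumption~\ref{Assumption_H}.

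With this in place I would apply Theorem~\ref{Theore:notion_of_solution_general_setting} with $w_0=w_j[t_j]=(\mathcal{K}_j u[t_j])(\cdot)$, which lies in $L^2(0,1)$ by hypothesis (and because $\mathcal{K}_j$ is bounded), with the constants $a_0,b_0,a_1,b_1$ chosen exactly as in the paragraph preceding the proposition so that the set $D$ of Theorem~\ref{Theore:notion_of_solution_general_setting} coincides with the set $D$ in the statement, and with any $T>t_{j+1}-t_j$ (this slightly larger horizon is needed to get $w_j[t]\in D$ up to and including $t=t_{j+1}$, rather than only on the open interval). The theorem then produces a unique $w_j\in C^0([t_j,t_{j+1}];L^2(0,1))\cap C^1((t_j,t_{j+1});L^2(0,1))$ with $w_j[t]\in D$ for $t\in(t_j,t_{j+1}]$, solving \eqref{eq:target_system}--\eqref{eq:target_BC} on $(t_j,t_{j+1})$, since on that open interval the extended $\mathcal{F}(t)$ agrees with the original one defined by \eqref{eq:operator_F_with_operatorTarget-system}.

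The only genuinely delicate point is the first one: confirming that $\mathcal{F}(t)$ in \eqref{eq:operator_F_with_operatorTarget-system} is a bona fide bounded linear operator on $L^2(0,1)$ which is Lipschitz in $t$ in the operator norm. This is precisely what the uniform kernel estimates \eqref{eq:estimates_kernels}--\eqref{eq:G} together with Assumption~\ref{Assumption_H} are tailored to deliver; once it is established, the remainder of the argument is a verbatim invocation of Theorem~\ref{Theore:notion_of_solution_general_setting} after a translation in time.
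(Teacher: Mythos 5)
Your proposal is correct and follows essentially the same route as the paper: the paper also obtains the proposition by identifying $(\mathcal{K}_j f_j[t])(x)$ with $(\mathcal{F}(t)w[t])(x)$ via \eqref{eq:operator_F_with_operatorTarget-system}, invoking Assumption~\ref{Assumption_H} for the bounds \eqref{eq:constant_bound_1_operatorF}--\eqref{eq:constant_bound_2_operatorF}, extending $\mathcal{F}(t)$ continuously past $t_{j+1}$, and applying Theorem~\ref{Theore:notion_of_solution_general_setting} with the stated choices of $a_0,b_0,a_1,b_1$. You merely make explicit the constants $\Omega_1=2\bar{\lambda}G^2$, $\Omega_2=\varphi G^2$ and the need to take $T>t_{j+1}-t_j$ so that $w_j[t]\in D$ holds at the right endpoint, details the paper leaves implicit.
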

Consequently, by the bounded invertibility of the backstepping transformation, for every $u_0 \in L^2(0,1)$,  we can  construct a solution $u \in C ^{0} \left(\left[0, \lim_{j \rightarrow \infty} ( t_{j} )\right) ; L^{2}(0,1)\right) $ with  $u[t] \in H^2(0,1)$ for $t \in (0, \lim_{j \rightarrow \infty} (t_j))$  and $u  \in C^{1}(\tilde{I}; L^{2}(0,1))$ where $\tilde{I}=\left[0, \lim _{j \rightarrow \infty}\left(t_{j}\right)\right) \backslash\left\{t_{j}: j=0,1,2, \ldots\right\}$ which also satisfies \eqref{eq:sysparabolic1}-\eqref{BC_parabolic_PDE_u1}  for all $t\in \tilde{I}$.

\subsection{Event-triggered gain scheduling}
\vskip 0.5cm 
Let us  consider the following  Sturm-Liouville operator $B: D \rightarrow L^2(0,1)$ defined by
\begin{equation}\label{eq:SL_A}
(B h)(x)= - \varepsilon \frac{d^2 h}{dx^2}(x)
\end{equation}
for all $h \in D$ and $x \in (0,1)$ where $D \subset H^2([0,1])$ is the set of functions $h : [0,1] \rightarrow \mathbb{R} $ for which one has $h^{'}(0)=q h(0) $ and $h(1)=0$ for the case of Dirichlet  actuation  or  $h^{'}(1)=0$ for the case of Neumann actuation.

We denote  $\mu_1 < \mu_2 < ... < \mu_n< ..$ with $\lim_{n \rightarrow \infty} (\mu_n) = + \infty$     and  $\phi_n(x) \in \mathcal{C}^{2}([0,1],\mathbb{R})$, ($n=1,2...$) the eigenvalues and  the eigenfunctions, respectively,  of the operator $B$. 
\subsubsection{Event-triggered gain scheduling with a static triggering condition}
\vskip 0.5cm 
 We introduce the first event-triggering  strategy (or mechanism)  for gain scheduling considered in this paper. The triggering condition is a state-dependent law and   determines the time instants at which the reaction coefficient has to be sampled and thereby when the kernel computation/updating  has to be done.

\begin{definition}[Definition of the static event-triggered mechanism for gain scheduling]\label{Definition_event_based_sheduler}
Let   $K_j$ be the kernel  satisfying  \eqref{kernelPDEntimesn-equidifussivityComponentform}-\eqref{kernelPDEntimesn-equidifussivityComponentformBC2} and let $f_j(t,x)$ be given by \eqref{functionalF1},  $j\geq 0$. Let $\mu_1$ be the principal eigenvalue of the Sturm-Liouville operator $B$ \eqref{eq:SL_A}. Let $R \in (0,1)$ be a design parameter and define
\begin{equation}\label{eq:parameter_mu}
\mu:= c + \varepsilon \mu_1
\end{equation}
The  static event-triggered gain scheduler is defined as follows:

The times of events $t_j\geq 0$  with $t_0=0$  form a finite or countable set of times  which is determined by the following rules for some $j\geq0$: \\

\begin{itemize}
\item [a)] if $ \{   t >  t_{j} :  \Big< (\mathcal{K}_j u[t]), (\mathcal{K}_j f_j[t]) \Big>       >  \mu R  \Vert (\mathcal{K}_j u[t]) \Vert^2      \} = \emptyset$  then the set of the times of the events is $\{t_{0},...,t_{j}\}$.\\ 
\item [b)] if $\{   t >  t_{j} :    \Big< (\mathcal{K}_j u[t]), (\mathcal{K}_j f_j[t]) \Big>          >  \mu R   \Vert (\mathcal{K}_j u[t] )\Vert^2       \} \neq \emptyset$, then the next event time is given by:
 \begin{equation}\label{triggering_conditionISS_with_backstepping_original}
\begin{split} 
t_{j+1} :=   &  \inf \{   t >  t_{j} :   \Big< (\mathcal{K}_j u[t]), (\mathcal{K}_j f_j[t]) \Big>      > \mu R   \Vert (\mathcal{K}_j u[t]) \Vert^2      \}  
\end{split} 
\end{equation}  
where $u[t]$ denotes the solution of \eqref{eq:sysparabolic0}-\eqref{BC_parabolic_PDE_u0} with \eqref{eq:control_feedback} or \eqref{eq:control_feedback_Neumann actuation} for $t \geq t_j$. 
\end{itemize}
\end{definition}

\subsubsection{Event-triggered gain scheduling with a dynamic triggering condition}
\vskip 0.5cm 

 Inspired by \cite{girard2014dynamic} and \cite{Espitia2018TAC}, we introduce the second event-triggering mechanism for gain scheduling in this paper. It involves a dynamic variable which can be viewed as a filtered value of the static triggering condition in \eqref{triggering_conditionISS_with_backstepping_original}.  With this strategy we expect to reduce updating times for the kernel scheduling and obtain larger inter-execution times.

\begin{definition}[Definition of the  dynamic event-triggered mechanism for  gain scheduling]\label{Definition_event_based_sheduler_Dynamic}
Let   $K_j$ be the kernel  satisfying  \eqref{kernelPDEntimesn-equidifussivityComponentform}-\eqref{kernelPDEntimesn-equidifussivityComponentformBC2},  $f_j(t,x)$ be given by \eqref{functionalF1},  $j\geq 0$ and  $\mu$ be given by \eqref{eq:parameter_mu}. Let $R \in (0,1)$, $ \eta  \geq  2\mu (1-R)$ and $\theta>0$   be design parameters. 
\\
The  dynamic event-triggered gain scheduler is defined as follows:

The times of events $t_j\geq 0$  with $t_0=0$  form a finite or countable set of times  which is determined by the following rules for some $j\geq0$: \\

\begin{itemize}
\item [a)] if $ \{   t >  t_{j}  :     \Big< (\mathcal{K}_j u[t]), (\mathcal{K}_j f_j[t]) \Big>        - \mu R   \Vert (\mathcal{K}_j u[t]) \Vert^2 >  \frac{1}{\theta}m(t)      \} = \emptyset$  then the set of the times of the events is $\{t_{0},...,t_{j}\}$.\\ 
\item [b)] if $\{   t >  t_{j}  :    \Big< (\mathcal{K}_j u[t]), (\mathcal{K}_j f_j[t]) \Big>          - \mu R   \Vert (\mathcal{K}_j u[t]) \Vert^2 >  \frac{1}{\theta}m(t)      \} \neq \emptyset$, then the next event time is given by:
 \begin{equation}\label{triggering_conditionISS_with_backstepping_original_Dynamic}
\begin{split} 
t_{j+1} :=   &  \inf \{   t >  t_{j}  :  \Big< (\mathcal{K}_j u[t]), (\mathcal{K}_j f_j[t]) \Big>    - \mu R   \Vert (\mathcal{K}_j u[t]) \Vert^2    > \frac{1}{\theta}m(t)      \}  
\end{split} 
\end{equation}  
where $u[t]$ denotes the solution of \eqref{eq:sysparabolic0}-\eqref{BC_parabolic_PDE_u0} with \eqref{eq:control_feedback} or \eqref{eq:control_feedback_Neumann actuation} for $t >t_j$ and 
 $m$ satisfies the ordinary differential equation
\begin{equation}\label{eq:internal_dynamic_variable}
\dot{m}(t) = - \eta m(t)  + \left( \mu R \Vert (\mathcal{K}_j u[t]) \Vert^2  -    \Big< (\mathcal{K}_j u[t]), (\mathcal{K}_j f_j[t]) \Big>    \right), \quad  \text{for}\quad   t \geq t_j
\end{equation}
 and we set $m(t_j) =0$. 
\end{itemize}
\end{definition}
\begin{remark}\label{remark:paramteres_two_event_strategies}
Let us remark  that the static event-triggered strategy  has only one design parameter (i.e. $R \in (0,1)$) whereas the dynamic event-triggered strategy has three additional design parameters, namely, $R$ (as in the static case),  $\eta$ and   $\theta$.  Essentially, $\eta$ adjusts the convergence rate of the filter \eqref{eq:internal_dynamic_variable}  that  can be  characterized as  $\eta \geq 2 \mu (1-R)$. The parameter $\theta$, on the other hand,   can be selected to contribute to sample less frequent than with the static event-triggered strategy. As a matter of  fact,  one can see the static event-triggering condition  \eqref{triggering_conditionISS_with_backstepping_original} as the limiting case  of the dynamic event-triggering condition \eqref{triggering_conditionISS_with_backstepping_original_Dynamic}-\eqref{eq:internal_dynamic_variable}  when $\theta$ goes to $ + \infty$. 
\end{remark}
The following result guarantees that the dynamic variable $m(t)$ remains always  positive between two successive triggering times. This fact is going to be helpful in the stability analysis of the closed-loop system.
\begin{lemma}\label{Lemma_dynamic_variable}
Under the definition of the event-triggered
gain scheduling    with dynamic trigger condition \eqref{triggering_conditionISS_with_backstepping_original_Dynamic}-\eqref{eq:internal_dynamic_variable}, it holds, for $t\in [t_{j},t_{j+1})$, $j\geq 0$, that $ \frac{1}{\theta}m(t) + \mu R   \Vert (\mathcal{K}_j u[t]) \Vert^2 -  \Big< (\mathcal{K}_j u[t]), (\mathcal{K}_j f_j[t]) \Big>   \geq 0  $ and $m(t)\geq 0$.
\end{lemma}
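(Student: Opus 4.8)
The plan is to read \eqref{eq:internal_dynamic_variable} as a scalar linear ODE for $m$ driven by the continuous scalar signal $\psi_j(t):=\mu R\Vert(\mathcal{K}_j u[t])\Vert^2-\langle(\mathcal{K}_j u[t]),(\mathcal{K}_j f_j[t])\rangle$, so that $\dot m(t)=-\eta m(t)+\psi_j(t)$ with $m(t_j)=0$, and the quantity in the statement is precisely $\frac{1}{\theta}m(t)+\psi_j(t)$. First I would check that $\psi_j$ is continuous on $[t_j,t_{j+1}]$: by Proposition~\ref{Proposition:existance_of_solution_target} the map $t\mapsto u[t]$ is continuous into $L^2(0,1)$, the operators $\mathcal{K}_j,\mathcal{L}_j$ are bounded, and by Assumption~\ref{Assumption_H} the map $t\mapsto e_j[t]=\lambda[t]-b_j$ is (Lipschitz, hence) continuous and bounded into $L^\infty(0,1)$, so $t\mapsto f_j[t]=e_j[t]u[t]$ is continuous into $L^2(0,1)$; consequently $m\in C^1([t_j,t_{j+1}])$ and the manipulations below are classical.

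Next I would establish the first inequality pointwise on $[t_j,t_{j+1})$. At $t=t_j$ one has $e_j(t_j,\cdot)=\lambda(t_j,\cdot)-b_j=0$ by \eqref{eq:space-varying_reaction_coeff}--\eqref{error_when_sampling}, hence $f_j[t_j]=0$, $\langle(\mathcal{K}_j u[t_j]),(\mathcal{K}_j f_j[t_j])\rangle=0$ and $m(t_j)=0$, so $\frac{1}{\theta}m(t_j)+\psi_j(t_j)=\mu R\Vert(\mathcal{K}_j u[t_j])\Vert^2\geq 0$. For $t\in(t_j,t_{j+1})$ I would use the definition of $t_{j+1}$ as the infimum in \eqref{triggering_conditionISS_with_backstepping_original_Dynamic}: were the triggering inequality $\langle(\mathcal{K}_j u[t]),(\mathcal{K}_j f_j[t])\rangle-\mu R\Vert(\mathcal{K}_j u[t])\Vert^2>\frac{1}{\theta}m(t)$ to hold at such a $t$, then $t$ would belong to the set whose infimum is $t_{j+1}$, forcing $t_{j+1}\leq t$, a contradiction; hence $-\psi_j(t)\leq\frac{1}{\theta}m(t)$, i.e.\ $\frac{1}{\theta}m(t)+\psi_j(t)\geq 0$. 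Case~(a) of Definition~\ref{Definition_event_based_sheduler_Dynamic} is covered identically, with $t_{j+1}$ replaced by the right endpoint of the maximal interval of existence.

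Finally, to obtain $m(t)\geq 0$ I would feed this bound back into the ODE: on $(t_j,t_{j+1})$ the estimate $\psi_j(t)\geq-\frac{1}{\theta}m(t)$ gives $\dot m(t)=-\eta m(t)+\psi_j(t)\geq-\left(\eta+\frac{1}{\theta}\right)m(t)$, hence $\frac{d}{dt}\!\left(e^{(\eta+1/\theta)(t-t_j)}m(t)\right)\geq 0$; thus $t\mapsto e^{(\eta+1/\theta)(t-t_j)}m(t)$ is nondecreasing on $[t_j,t_{j+1})$ and vanishes at $t_j$, so it is nonnegative and therefore $m(t)\geq 0$ on $[t_j,t_{j+1})$. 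I do not anticipate a genuine obstacle here; the only points demanding care are the continuity of $\psi_j$ (needed so that the integrating-factor step is a legitimate classical computation) and the precise logical passage from "$t_{j+1}$ is an infimum" to "the triggering inequality fails throughout the open interval $(t_j,t_{j+1})$", together with the separate treatment of the left endpoint $t_j$, where $e_j$ vanishes. Note that only $\theta>0$ is actually used, and not the stability-related constraint $\eta\geq 2\mu(1-R)$.
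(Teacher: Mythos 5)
Your proposal is correct and follows essentially the same route as the paper: both deduce the inequality $\tfrac{1}{\theta}m(t)+\mu R\Vert(\mathcal{K}_j u[t])\Vert^2-\langle(\mathcal{K}_j u[t]),(\mathcal{K}_j f_j[t])\rangle\geq 0$ directly from the definition of the triggering rule on $[t_j,t_{j+1})$, substitute it into \eqref{eq:internal_dynamic_variable} to get $\dot m(t)\geq-(\eta+\tfrac{1}{\theta})m(t)$, and conclude $m(t)\geq 0$ from $m(t_j)=0$ via the comparison principle (your integrating-factor step is the same argument). The extra details you supply --- continuity of the driving signal, the explicit infimum-to-contradiction passage, and the separate check at $t=t_j$ where $f_j[t_j]=0$ --- are all consistent with, and merely flesh out, the paper's terser proof.
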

\begin{proof}
From definition of the  the event-triggered gain scheduling     with dynamic triggering condition \eqref{triggering_conditionISS_with_backstepping_original_Dynamic}-\eqref{eq:internal_dynamic_variable}, events are triggered to guarantee, for $t\in[t_j,t_{j+1})$,  $j \geq 0$ that $ \frac{1}{\theta}m(t) + \mu R   \Vert (\mathcal{K}_j u[t]) \Vert^2 -  \Big< (\mathcal{K}_j u[t]), (\mathcal{K}_j f_j[t]) \Big>   \geq 0 $. This inequality in conjunction with \eqref{eq:internal_dynamic_variable} yields:
\begin{equation}
\dot{m}(t) \geq -(\eta+\frac{1}{\theta}) m(t)  
\end{equation}
for which the Comparison principle can be used to guarantee $m(t) \geq 0$, for all  $t\in[t_j,t_{j+1})$,  $j \geq 0$  and provided that $m(t_j)\geq 0$.  
\end{proof}
Lemma \ref{Lemma_dynamic_variable} guarantees that $\tfrac{1}{\theta}m(t_{j+1}) + \mu R   \Vert (\mathcal{K}_j u[t_{j+1}]) \Vert^2 -  \big< (\mathcal{K}_j u[t_{j+1}]), (\mathcal{K}_j f_j[t_{j+1}]) \big> \newline  \geq 0$ and that $m(t_{j+1})\geq 0$   when $t_{j+1}<+ \infty$ (by continuity).

\begin{proposition}\label{proposition:first_triggering_time}
If the time of the next event generated by  \eqref{triggering_conditionISS_with_backstepping_original}  is finite, then the time of the next event generated by  the dynamic event triggered mechanism \eqref{triggering_conditionISS_with_backstepping_original_Dynamic}-\eqref{eq:internal_dynamic_variable}  is strictly larger than the time of the next event generated by the  static event triggered mechanism \eqref{triggering_conditionISS_with_backstepping_original}.

\end{proposition}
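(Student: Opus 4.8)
The plan is to compare the two schedulers along the \emph{same} state trajectory. Fix the index $j$; since both mechanisms use the same kernel $K_j$ and are started from the same data at $t_j$, the closed-loop state $u[t]$ is the same function of $t\ge t_j$ up to (and including) whichever triggering time occurs first. Introduce the scalar function
$h(t):=\big\langle (\mathcal{K}_j u[t]),(\mathcal{K}_j f_j[t])\big\rangle-\mu R\Vert (\mathcal{K}_j u[t])\Vert^2$,
which is continuous on the interval of existence (because $u\in C^0$ into $L^2(0,1)$, $\mathcal{K}_j$ is bounded, and $e_j$ is continuous). Let $\tau_s>t_j$ denote the next event time generated by \eqref{triggering_conditionISS_with_backstepping_original} (finite by hypothesis) and $\tau_d$ the next event time generated by \eqref{triggering_conditionISS_with_backstepping_original_Dynamic}--\eqref{eq:internal_dynamic_variable}; the goal is $\tau_d>\tau_s$.

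The first step is to pin down the sign of $h$ on $[t_j,\tau_s]$. Because the reaction coefficient is sampled exactly at $t_j$, we have $e_j(t_j,\cdot)\equiv 0$, hence $f_j(t_j,\cdot)\equiv 0$ and $h(t_j)=-\mu R\Vert (\mathcal{K}_j u[t_j])\Vert^2\le 0$; in fact $h(t_j)<0$, for otherwise $\mathcal{K}_j u[t_j]=0$, so $u[t_j]=0$ by invertibility, the solution vanishes identically, and $\tau_s=+\infty$, contradicting the hypothesis. Consequently $\tau_s>t_j$, and the infimum characterization of $\tau_s$ together with the continuity of $h$ gives $h(t)\le 0$ for all $t\in[t_j,\tau_s]$ and $h(\tau_s)=0$.

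The second step is to establish strict positivity of the filter variable on $(t_j,\tau_s]$. On $[t_j,\tau_s)$ the forcing term in \eqref{eq:internal_dynamic_variable} equals $-h(t)\ge 0$, so with $m(t_j)=0$ the variation-of-constants formula yields $m(t)=\int_{t_j}^{t}e^{-\eta(t-s)}\bigl(-h(s)\bigr)ds\ge 0$; since $-h(t_j)=\mu R\Vert (\mathcal{K}_j u[t_j])\Vert^2>0$ and $h$ is continuous, in fact $m(t)>0$ for every $t\in(t_j,\tau_s]$.

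The last step is the conclusion. For $t\in(t_j,\tau_s)$ we have $h(t)\le 0<\tfrac1\theta m(t)$, and at $t=\tau_s$ we have $h(\tau_s)=0<\tfrac1\theta m(\tau_s)$; hence $h(t)-\tfrac1\theta m(t)<0$ on all of $(t_j,\tau_s]$. By continuity this strict inequality persists on some interval $[\tau_s,\tau_s+\delta)$, so the dynamic triggering inequality $h(t)>\tfrac1\theta m(t)$ fails for every $t\in(t_j,\tau_s+\delta)$; therefore $\tau_d\ge\tau_s+\delta>\tau_s$. The only delicate point is the boundary behaviour at $\tau_s$ — namely that $h(\tau_s)=0$ exactly rather than merely $h(\tau_s)\le 0$, which combines the continuity of $h$ from the left on $[t_j,\tau_s)$ with the approximation of $\tau_s$ from the right by points where $h>0$; the remaining steps are direct computations.
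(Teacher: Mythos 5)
Your proof is correct and follows essentially the same route as the paper's: both arguments rest on the facts that the static condition forces $q(t):=-h(t)\ge 0$ up to the static event time, that $m(t)=\int_{t_j}^{t}e^{-\eta(t-s)}\bigl(-h(s)\bigr)\,ds$, and that $-h(t_j)=\mu R\Vert \mathcal{K}_j u[t_j]\Vert^2>0$ because $f_j[t_j]=0$. The only difference is presentational: the paper argues by contradiction (assuming $\tau_d\le\tau_s$ forces $m(\tau_d)=0$ and hence $q\equiv 0$, contradicting $q(t_j)>0$), whereas you argue directly that $m>0$ on $(t_j,\tau_s]$, so the dynamic condition cannot fire there and, by continuity, not immediately afterwards either.
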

\begin{proof}
Without loss of generality we may assume that $j=0$ (and consequently $t_0=0$). Notice that if $u[0]=0$ then both the static and dynamic event triggering conditions give  $t_1=+\infty$. By assumption, the time of the next event generated by the static strategy is finite; therefore it holds that $u[0]$ is not zero. Consequently, $\mathcal{K}_0 u[0]$ is not zero. \\
Let $t_1$ be the time of the next event generated by  the static event triggered mechanism and let $T$ be the time of the next event generated by  the dynamic one. We  show next that $T>t_1$ by contradiction. Assume that $T\leq t_1$. Define
\begin{equation}\label{eq:q_first_triggering_time}
q(t):= \mu R   \Vert (\mathcal{K}_0 u[t]) \Vert^2 -  \big< (\mathcal{K}_0 u[t]), (\mathcal{K}_0 f_0[t]) \big>
\end{equation}
Then we have by virtue of \eqref{triggering_conditionISS_with_backstepping_original}, \eqref{eq:q_first_triggering_time}  $q(t) \geq 0$ for all $t \in [0,t_1]$ and  by virtue of \eqref{triggering_conditionISS_with_backstepping_original_Dynamic}, \eqref{eq:q_first_triggering_time}  $q(T)=-\frac{1}{\theta}m(T)$, implying  that $m(T)\leq 0$. Since $m(0)=0$ and $\dot{m}(t) = -\eta m(t) +q(t)$ for all $t  \in [0,T]$, we have
\begin{equation}
m(t)=  \int_{0}^{t} \exp(-\eta (t-s))q(s)ds, \quad \text{for all} \quad t \in [0,T]
\end{equation} 
Since $q(t)\geq 0$ for all $t \in [0,T]$ we get $m(T)\geq 0$ and thus we conclude that $m(T)=0$.  By continuity of $q(t)$ and the fact that $q(t) \geq 0$ for all $t \in [0,T]$, the integral $\int_{0}^{T}\exp(-\eta(T-s))q(s)ds $ is  zero  only if $q(t)$ is identically zero on $[0,T]$. However,  that is not possible since $f_0[0]=0$ (recall \eqref{eq:space-varying_reaction_coeff},\eqref{error_when_sampling} and \eqref{functionalF1})   and since $q(0)=  \mu R \Vert \mathcal{K}_0 u[0]\Vert^2  - \big <\mathcal{K}_0 u[0],K_0f_0[0]\big>  =  \mu R \Vert \mathcal{K}_0 u[0]\Vert^2  >0 $. Thus, it must hold that $T> t_1$. 
\end{proof}

\section{Analysis of the closed-loop system and main results}\label{section:main-results}

In this section we present our main results: the avoidance for the Zeno behavior \footnote{We recall,  the Zeno phenomenon  means infinite triggering times in a finite-time interval.  In practice, Zeno phenomenon would represent infeasible implementation into digital platforms since one would require to sample infinitely fast.},  the well-posedness and the exponential
stability of the closed-loop system under boundary controller whose gains are scheduled according to the two event-triggered  strategies. 

\subsection{Avoidance of the Zeno phenomenon}

\subsubsection{Event-triggered gain scheduling with a static triggering condition}\label{Subsection_dwell_time_static}

\begin{proposition}\label{theo:minimal_dwellt_time}
Under  \eqref{triggering_conditionISS_with_backstepping_original},  there exists a minimal dwell-time between two triggering times, i.e.  there exists a constant $\tau >0$ (independent of the initial condition $u_0$) such that $t_{j+1} - t_{j} \geq \tau $,  for all $j \geq 0$.  More specifically,  $\tau$ satisfies: 
\begin{equation}\label{eq:minimal_dwell_time01}
\tau=\frac{1}{\varphi}\frac{\mu R}{G^2}
\end{equation}
with $\mu= c + \varepsilon \mu_1$ (recall \eqref{eq:parameter_mu} with $\mu_1$ being the principal eigenvalue of the Sturm-Liouville operator $B$  \eqref{eq:SL_A}), $R \in (0,1)$ being the  design parameter involved in the event-triggering condition \eqref{triggering_conditionISS_with_backstepping_original}, $\varphi$ as in Assumption \ref{Assumption_H}  and $G$ given by \eqref{eq:G}.
\end{proposition}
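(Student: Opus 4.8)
The plan is to exploit the fact that the sampling error $e_j$ vanishes at the event time $t_j$ and, by Assumption~\ref{Assumption_H}, grows at most linearly afterwards, so that the functional appearing in the triggering condition \eqref{triggering_conditionISS_with_backstepping_original} stays below its threshold for a fixed duration. Fix $j\ge 0$ and suppose $t_{j+1}<+\infty$ (otherwise there is nothing to prove). Recall from the discussion preceding Proposition~\ref{Proposition:existance_of_solution_target} that, after continuously extending the operator in \eqref{eq:operator_F_with_operatorTarget-system}, the solution $u$ of \eqref{eq:sysparabolic0}--\eqref{BC_parabolic_PDE_u0} with control based on the kernel $K_j$ exists and is continuous in $L^2(0,1)$ for all $t\ge t_j$; hence so are $w_j=\mathcal{K}_j u$ and $f_j=e_j u$, and the map $t\mapsto \big< \mathcal{K}_j u[t],\mathcal{K}_j f_j[t]\big> -\mu R\Vert \mathcal{K}_j u[t]\Vert^2$ is continuous on $[t_j,+\infty)$.

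First I would record the pointwise bound on the error: by \eqref{error_when_sampling} one has $e_j(t_j,x)=0$, and Assumption~\ref{Assumption_H} gives $|e_j(t,x)|=|\lambda(t,x)-\lambda(t_j,x)|\le\varphi(t-t_j)$ for all $x\in[0,1]$ and $t\ge t_j$. Plugging this into \eqref{functionalF1} yields $\Vert f_j[t]\Vert\le \varphi(t-t_j)\Vert u[t]\Vert$. Next I would chain the transformation estimates: by \eqref{eq:estimate_Kj} and \eqref{estimates_transformations_G}, $\Vert \mathcal{K}_j f_j[t]\Vert\le G\Vert f_j[t]\Vert$; and since $u[t]=\mathcal{L}_j(\mathcal{K}_j u[t])$, by \eqref{eq:estimate_Lj} and \eqref{estimates_transformations_G}, $\Vert u[t]\Vert\le G\Vert \mathcal{K}_j u[t]\Vert$. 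Combining these gives $\Vert \mathcal{K}_j f_j[t]\Vert\le G^2\varphi(t-t_j)\Vert \mathcal{K}_j u[t]\Vert$, and by the Cauchy--Schwarz inequality
\begin{equation*}
\big< \mathcal{K}_j u[t],\mathcal{K}_j f_j[t]\big>\le \Vert \mathcal{K}_j u[t]\Vert\,\Vert \mathcal{K}_j f_j[t]\Vert\le G^2\varphi(t-t_j)\,\Vert \mathcal{K}_j u[t]\Vert^2
\end{equation*}
for all $t\ge t_j$. Therefore, whenever $t-t_j\le \tau:=\mu R/(\varphi G^2)$ we have $G^2\varphi(t-t_j)\le\mu R$, so the right-hand side is at most $\mu R\Vert \mathcal{K}_j u[t]\Vert^2$ and the strict inequality in \eqref{triggering_conditionISS_with_backstepping_original} cannot hold (this also disposes of the degenerate case $\mathcal{K}_j u[t]=0$, where the strict inequality reads $0>0$). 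Hence the set over which the infimum in \eqref{triggering_conditionISS_with_backstepping_original} is taken is contained in $(t_j+\tau,+\infty)$, which gives $t_{j+1}-t_j\ge\tau$; since $\tau$ depends only on $\varphi$, $\mu$, $R$ and $G$, it is independent of $j$ and of $u_0$, establishing \eqref{eq:minimal_dwell_time01}.

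The only genuinely delicate ingredient is the ``backward'' estimate $\Vert u[t]\Vert\le G\Vert \mathcal{K}_j u[t]\Vert$: it requires a bound on the inverse-transformation norm $\tilde L_j$ that is uniform in $j$, which is exactly what \eqref{eq:estimates_kernels}--\eqref{eq:G} provide, the uniformity stemming from the $j$-independent bound $\bar\lambda$ on the sampled coefficients $b_j$. Everything else is a routine chaining of the a priori estimates together with the continuity argument that turns the pointwise failure of the triggering inequality into a lower bound on the infimum.
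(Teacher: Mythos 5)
Your proof is correct and follows essentially the same route as the paper: both arguments chain Cauchy--Schwarz with the uniform bound $G$ on the direct and inverse transformations and the Lipschitz estimate $\Vert e_j[t]\Vert_\infty\le\varphi(t-t_j)$ to show the triggering functional cannot exceed $\mu R\Vert \mathcal{K}_j u[t]\Vert^2$ before $\tau=\mu R/(\varphi G^2)$ has elapsed. The only (minor) difference is one of direction—the paper evaluates the inequality at the event time $t_{j+1}$ and divides by $\Vert w_j[t_{j+1}]\Vert^2$, whereas you bound the functional forward in time from $t_j$, which lets you handle the degenerate case $\mathcal{K}_j u[t]=0$ explicitly.
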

\begin{proof}
 Assume that an event occurs at   $t = t_{j+1}$, Then,   from  and using  \eqref{backstepping _direct_trasf_1}, continuity of all mappings involved with respect to time and the Cauchy-Schwarz inequality, the following  more conservative estimate holds:
\begin{equation}\label{triggering_cond_at_tn+1}
\Vert  w[t_{j+1}]) \Vert  \Vert  (\mathcal{K}_j f_j[t_{j+1}]) \Vert        \geq  \mu R  \Vert w[t_{j+1}] \Vert^2 
\end{equation} 
Using  \eqref{backstepping _inverse_trasf_1}, \eqref{functionalF1}, \eqref{eq:estimate_Kj}-\eqref{eq:G} we get from \eqref{triggering_cond_at_tn+1}:
\begin{equation}
G^2 \Vert w[t_{j+1}] \Vert^2 \Vert e_j[t_{j+1}] \Vert_{\infty}  \geq \mu R  \Vert w[t_{j+1}] \Vert^2
\end{equation}
Therefore,
\begin{equation}
G^2 \Vert e_j[t_{j+1}] \Vert_{\infty}  \geq \mu R 
\end{equation}
By virtue of Assumption \ref{Assumption_H} in conjunction with \eqref{eq:space-varying_reaction_coeff}  and \eqref{error_when_sampling}, we obtain for all $j \geq 0$
\begin{equation}
G^2 \varphi (t_{j+1}-t_j)  \geq \mu R 
\end{equation}
from which we can deduce (using definition \eqref{eq:minimal_dwell_time01})
\begin{equation}\label{eq:minimal_dwell_time0}
  t_{j+1}-t_j \geq \tau
\end{equation}
being $\tau$ the minimal dwell-time (independent on initial conditions).
 \end{proof}
Proposition \ref{theo:minimal_dwellt_time} allows us to conclude that $\lim(t_j)=+\infty$ and thereby we can apply Proposition \ref{Proposition:existance_of_solution_target} to  get the following result  on  the existence of solutions of the closed-loop system  \eqref{eq:sysparabolic0}-\eqref{IC_parabolic_PDE_u0}  with control \eqref{eq:control_feedback}, under the static event-triggered gain scheduler    \eqref{triggering_conditionISS_with_backstepping_original}.
\begin{corollary}\label{existence_and_continuity_solutions_on_R}
For every initial condition $u_0 \in L^2(0,1)$, there exists a unique mapping $u \in  C^{0}(\mathbb{R}_{+}; L^{2}(0,1)) \cap C^{1}(I; L^{2}(0,1))$, $u[t] \in H^2(0,1)$ for $t>0$ that satisfies \eqref{eq:sysparabolic0}-\eqref{IC_parabolic_PDE_u0}  with control \eqref{eq:control_feedback}  or \eqref{eq:control_feedback_Neumann actuation}, under the static event-triggered gain scheduler   \eqref{triggering_conditionISS_with_backstepping_original} for all $t \in I $ and $I = \mathbb{R}_{+} \backslash \{ t_{j} \geq 0, j= 0,1,2,...\} $.
\end{corollary}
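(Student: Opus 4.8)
The plan is to combine the uniform dwell-time bound of Proposition~\ref{theo:minimal_dwellt_time} with the step (gluing) method based on Proposition~\ref{Proposition:existance_of_solution_target}. The first step is to observe that, since $t_{j+1}-t_j\geq\tau=\frac{1}{\varphi}\frac{\mu R}{G^2}>0$ for all $j$ with $\tau$ independent of $u_0$, in the case where the set of events is infinite one has $t_j\geq j\tau$, hence $\lim_{j\to\infty}(t_j)=+\infty$, so the intervals $[t_j,t_{j+1})$ exhaust $\mathbb{R}_+$. In the case where the set of events is finite, say $\{t_0,\dots,t_N\}$, the operator $\mathcal{F}(t)$ of \eqref{eq:operator_F_with_operatorTarget-system} reduces on $[t_N,+\infty)$ to the fixed bounded operator associated with $b_N$, and Theorem~\ref{Theore:notion_of_solution_general_setting} applied on $[t_N,T]$ for arbitrary $T>0$ again provides a solution on $[t_N,+\infty)$; so $\mathbb{R}_+$ is covered in both cases.

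Next I would carry out the step construction. Put $u[0]=u_0\in L^2(0,1)$; given $u[t_j]\in L^2(0,1)$, set $w_j[t_j]:=\mathcal{K}_j u[t_j]\in L^2(0,1)$ (finite by \eqref{eq:estimate_Kj}) and invoke Proposition~\ref{Proposition:existance_of_solution_target} --- or Theorem~\ref{Theore:notion_of_solution_general_setting} on the last, possibly unbounded, interval --- to get the unique $w_j\in C^0([t_j,t_{j+1}];L^2(0,1))\cap C^1((t_j,t_{j+1});L^2(0,1))$ with $w_j[t]\in D$ for $t\in(t_j,t_{j+1}]$ solving \eqref{eq:target_system}--\eqref{eq:target_BC}. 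Then $u[t]:=\mathcal{L}_j w_j[t]$ on $[t_j,t_{j+1}]$ solves \eqref{eq:sysparabolic1}--\eqref{BC_parabolic_PDE_u1} on $(t_j,t_{j+1})$ --- equivalently \eqref{eq:sysparabolic0}--\eqref{BC_parabolic_PDE_u0} there, since $\lambda(t,x)=b_j(x)+e_j(t,x)$ --- by the bounded invertibility of the Volterra transformation, and $u[t]\in H^2(0,1)$ for $t>t_j$ because $\mathcal{L}_j$ maps $D$ into $H^2(0,1)$ while carrying the target boundary conditions into those of the closed-loop system; in particular $u[t_{j+1}]\in H^2(0,1)\subset L^2(0,1)$ is the datum for the next step, so the induction closes. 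Concatenating over $j$ yields $u$ on all of $\mathbb{R}_+$ with $u[t]\in H^2(0,1)$ for $t>0$ and $u\in C^1(I;L^2(0,1))$, $I=\mathbb{R}_+\setminus\{t_j:j\geq 0\}$, since on each open interval $u=\mathcal{L}_j w_j$ with $w_j$ of class $C^1$ in time.

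Finally I would verify global continuity and uniqueness: at each $t_j$ the left limit of $u$ is the terminal value $u[t_j]$ of the previous step while the right limit is $\mathcal{L}_j w_j[t_j]=\mathcal{L}_j\mathcal{K}_j u[t_j]=u[t_j]$ since $\mathcal{L}_j=\mathcal{K}_j^{-1}$, so $u\in C^0(\mathbb{R}_+;L^2(0,1))$; uniqueness on each interval follows from the uniqueness assertions of Proposition~\ref{Proposition:existance_of_solution_target}/Theorem~\ref{Theore:notion_of_solution_general_setting} together with invertibility of $\mathcal{K}_j$, and propagates to $\mathbb{R}_+$ by induction on $j$. I expect the only genuinely delicate point to be bookkeeping rather than new analysis: the event time $t_{j+1}$ is defined through the solution of \eqref{eq:sysparabolic0}--\eqref{BC_parabolic_PDE_u0} with the coefficient frozen at $b_j$ (equivalently, the operator $\mathcal{F}$ extended continuously past $t_{j+1}$) on the whole half-line $[t_j,+\infty)$, so one must first produce that solution via Theorem~\ref{Theore:notion_of_solution_general_setting}, then read off $t_{j+1}$ from \eqref{triggering_conditionISS_with_backstepping_original}, and only afterwards restrict to $[t_j,t_{j+1}]$ and re-sample the coefficient; the dwell-time bound of Proposition~\ref{theo:minimal_dwellt_time} is precisely what prevents this recursion from accumulating before $+\infty$, making the construction globally well defined.
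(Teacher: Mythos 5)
Your proposal is correct and follows essentially the same route as the paper: the paper's own proof is a one-line appeal to Proposition~\ref{Proposition:existance_of_solution_target} and Proposition~\ref{theo:minimal_dwellt_time} with the solution built iteratively by the step method between successive triggering times, which is exactly what you carry out in detail (including the extension of $\mathcal{F}(t)$ past $t_{j+1}$ that the paper invokes just before Proposition~\ref{Proposition:existance_of_solution_target}). Your added bookkeeping on the finite-event case, the continuity at the junctions via $\mathcal{L}_j=\mathcal{K}_j^{-1}$, and the order in which $t_{j+1}$ is read off is a faithful elaboration rather than a different argument.
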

\begin{proof}
It is an immediate  consequence of Proposition \ref{Proposition:existance_of_solution_target} and Proposition \ref{theo:minimal_dwellt_time}.  Indeed, the solution is constructed  (by the step method) iteratively between successive triggering times.  
\end{proof} 

\begin{remark}
The minimal dwell-time depends on the rate of change of the reaction coefficient. A higher rate of change of the reaction coefficient (i.e., large $\varphi$) would give a smaller minimal dwell-time (i.e., more frequent event triggering). This is expected since a high rate of change of the reaction coefficient requires a more frequent update of the control law.
\end{remark}

\subsubsection{Event-triggered gain scheduling with a dynamic triggering condition}
 By virtue of Proposition \ref{proposition:first_triggering_time}, the inter-execution time for the dynamic event triggered mechanism \eqref{triggering_conditionISS_with_backstepping_original_Dynamic}-\eqref{eq:internal_dynamic_variable} always exceeds the inter-execution time for the static event triggered mechanism \eqref{triggering_conditionISS_with_backstepping_original}.  Due to Proposition \ref{theo:minimal_dwellt_time}, the Zeno
phenomenon is immediately excluded.

Therefore, as in the static case,  we can also conclude that $\lim(t_j)=+\infty$ and thereby we can apply Proposition \ref{Proposition:existance_of_solution_target} to  get the following result  on  the existence of solutions of the closed-loop system  \eqref{eq:sysparabolic0}-\eqref{IC_parabolic_PDE_u0}  with control \eqref{eq:control_feedback}  or \eqref{eq:control_feedback_Neumann actuation}, under the dynamic event-triggered gain scheduler    \eqref{triggering_conditionISS_with_backstepping_original_Dynamic}-\eqref{eq:internal_dynamic_variable}.

\begin{corollary}\label{existence_and_continuity_solutions_on_R_2}
For every initial condition $u_0 \in L^2(0,1)$, there exists a unique mapping $u \in  C^{0}(\mathbb{R}_{+}; L^{2}(0,1)) \cap C^{1}(I; L^{2}(0,1))$, $u[t] \in H^2(0,1)$ for $t>0$ that satisfies \eqref{eq:sysparabolic0}-\eqref{IC_parabolic_PDE_u0}  with control \eqref{eq:control_feedback} or \eqref{eq:control_feedback_Neumann actuation}, under the dynamic event-triggered gain scheduler \eqref{triggering_conditionISS_with_backstepping_original_Dynamic}-\eqref{eq:internal_dynamic_variable} for all $t \in I $ and $I = \mathbb{R}_{+} \backslash \{ t_{j} \geq 0, j= 0,1,2,...\} $.
\end{corollary}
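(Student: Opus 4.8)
The plan is to mirror verbatim the argument used for Corollary~\ref{existence_and_continuity_solutions_on_R} in the static case, the only new ingredients being that the triggering times are now generated by the dynamic rule \eqref{triggering_conditionISS_with_backstepping_original_Dynamic}-\eqref{eq:internal_dynamic_variable} and that an extra scalar ODE for $m$ must be carried along. First I would establish that the inter-execution times are bounded below by a strictly positive constant: by Proposition~\ref{proposition:first_triggering_time} each dynamic inter-execution time is no smaller than the corresponding static one, and the latter is bounded below by the dwell-time $\tau=\frac{1}{\varphi}\frac{\mu R}{G^2}>0$ of Proposition~\ref{theo:minimal_dwellt_time}. Hence $t_{j+1}-t_j\geq\tau$ for every $j$, which forces $\lim_{j\to\infty}t_j=+\infty$, excludes the Zeno phenomenon, and guarantees that the sequence of events covers all of $\mathbb{R}_+$.

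Second, I would build the solution by the step method on the successive intervals $[t_j,t_{j+1}]$. Given $u[t_j]\in L^2(0,1)$ (equal to $u_0$ for $j=0$ and to the value inherited from the previous step for $j\geq 1$), set $w_j[t_j]=\mathcal{K}_j u[t_j]$; extend the operator $\mathcal{F}(t)=\mathcal{K}_j(e_j[t]\,\mathcal{L}_j\,\cdot\,)$ continuously for $t\geq t_{j+1}$ exactly as in the paragraph preceding Proposition~\ref{Proposition:existance_of_solution_target}, so that \eqref{eq:constant_bound_1_operatorF}-\eqref{eq:constant_bound_2_operatorF} hold on all of $\mathbb{R}_+$ by Assumption~\ref{Assumption_H} and the uniform kernel bound \eqref{estimates_transformations_G}; then Theorem~\ref{Theore:notion_of_solution_general_setting} yields a unique $w_j\in C^0([t_j,\infty);L^2(0,1))\cap C^1((t_j,\infty);L^2(0,1))$ with $w_j[t]\in D$ for $t>t_j$, and hence, via the bounded inverse $\mathcal{L}_j$, a unique $u$ on $[t_j,\infty)$ (before recomputation of the kernel, $u[t]=\mathcal{L}_j w_j[t]$). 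Feeding $\mathcal{K}_j u[t]=w_j[t]$ into the scalar linear ODE \eqref{eq:internal_dynamic_variable} with $m(t_j)=0$ gives, by variation of constants, a unique absolutely continuous $m$ on $[t_j,\infty)$. The next event time $t_{j+1}$ is then read off from \eqref{triggering_conditionISS_with_backstepping_original_Dynamic}; restricting $(w_j,m)$ to $[t_j,t_{j+1}]$ completes the step, with uniqueness provided jointly by Theorem~\ref{Theore:notion_of_solution_general_setting} and by uniqueness for \eqref{eq:internal_dynamic_variable}.

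Third, I would glue the pieces. Since the value $u[t_{j+1}]$ produced by the $j$-th step is precisely the starting value of the $(j+1)$-th step, the concatenation is continuous at every $t_{j+1}$, so $u\in C^0(\mathbb{R}_+;L^2(0,1))$; it lies in $C^1(I;L^2(0,1))$ on $I=\mathbb{R}_+\setminus\{t_j:j\geq 0\}$ and satisfies $u[t]\in H^2(0,1)$ for $t>0$ because $w_j[t]\in D\subset H^2(0,1)$ and $\mathcal{L}_j$ maps $H^2$ into $H^2$ while preserving the boundary conditions. In contrast to $u$, the target variable need not be continuous across the $t_j$, since the transformation $\mathcal{K}_j$ changes there; this is harmless because the construction is carried out entirely in the $u$-variable.

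The only real obstacle is bookkeeping, namely ruling out a circular dependence between $m$ and the triggering times: the triggering rule \eqref{triggering_conditionISS_with_backstepping_original_Dynamic} uses $m(t)$, which in turn depends on $u$ on $[t_j,t_{j+1})$. This is resolved by first solving the PDE--ODE cascade on the whole half-line $[t_j,\infty)$ with the continuously extended $\mathcal{F}(t)$ and $m(t_j)=0$, and only afterwards determining $t_{j+1}$; the forward-in-time character of both the parabolic equation and the linear ODE ensures that the solution on $[t_j,t_{j+1})$ is independent of the chosen extension. Everything else is a direct repetition of the proof of Corollary~\ref{existence_and_continuity_solutions_on_R}.
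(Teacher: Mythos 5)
Your proposal is correct and follows essentially the same route as the paper: the paper likewise deduces the dwell-time bound for the dynamic scheduler from Proposition~\ref{proposition:first_triggering_time} combined with Proposition~\ref{theo:minimal_dwellt_time}, concludes $\lim_{j\to\infty}t_j=+\infty$, and then invokes the step-method construction of Proposition~\ref{Proposition:existance_of_solution_target}. Your additional remarks on solving the PDE--ODE cascade on $[t_j,\infty)$ before reading off $t_{j+1}$ merely make explicit details the paper leaves implicit.
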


\subsection{Exponential stability analysis}
\vskip 0.5cm 
We present next the stability results under our two event-triggered gain scheduling strategies.
\subsubsection{Event-triggered gain scheduling with a static triggering condition}

\vskip 0.4cm 
 \begin{theorem}\label{main_theorem_EBC_backstepping}
 Under Assumption \ref{Assumption_H},  if   the following condition is fulfilled,
\begin{equation}\label{eq:condition_gamma_tau_sigma}
\varphi  <\frac{\mu^2 R (1-R)}{G^2\ln\left( G  \right)},
\end{equation}
where $\varphi$, $G$, $\mu$, $R$ are defined, respectively, in \eqref{Hypotesis_slowvarying}, \eqref{eq:G}, \eqref{eq:parameter_mu}, \eqref{triggering_conditionISS_with_backstepping_original};
then, the  closed-loop system  \eqref{eq:sysparabolic0}-\eqref{IC_parabolic_PDE_u0}  with control \eqref{eq:control_feedback}  or \eqref{eq:control_feedback_Neumann actuation}, under the static event-triggered gain scheduler    \eqref{triggering_conditionISS_with_backstepping_original},  is globally exponentially stable. More specifically, there exists a constant $\sigma>0$  
such that:
\begin{equation}\label{eq:L2_GES}
\Vert u[t] \Vert \leq G  \exp(-\sigma t) \Vert u[0] \Vert, \quad \text{for all} \quad t\geq 0
\end{equation}
 \end{theorem}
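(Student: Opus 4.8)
The plan is to work in the target-system variable $w_j$ on each interval $[t_j, t_{j+1})$, exploit the static triggering condition to control the cross term $\langle (\mathcal{K}_j u[t]), (\mathcal{K}_j f_j[t])\rangle$, and then stitch the interval-by-interval estimates together using the minimal dwell-time from Proposition~\ref{theo:minimal_dwellt_time} together with the bound $G$ on the transformations. First I would compute, for $t \in (t_j, t_{j+1})$, the time derivative of $\tfrac12\Vert w_j[t]\Vert^2 = \tfrac12\Vert (\mathcal{K}_j u[t])\Vert^2$ along \eqref{eq:target_system}-\eqref{eq:target_BC}. Integrating by parts and using the boundary conditions together with the Poincaré-type inequality $\varepsilon\int_0^1 (w_{j,x})^2 dx \geq \varepsilon\mu_1 \Vert w_j[t]\Vert^2$ (which is exactly what the principal eigenvalue $\mu_1$ of $B$ in \eqref{eq:SL_A} encodes), one gets
\begin{equation*}
\frac{d}{dt}\left(\tfrac12\Vert w_j[t]\Vert^2\right) \leq -(c+\varepsilon\mu_1)\Vert w_j[t]\Vert^2 + \big\langle (\mathcal{K}_j u[t]), (\mathcal{K}_j f_j[t])\big\rangle = -\mu \Vert w_j[t]\Vert^2 + \big\langle (\mathcal{K}_j u[t]), (\mathcal{K}_j f_j[t])\big\rangle.
\end{equation*}
Now the static triggering rule \eqref{triggering_conditionISS_with_backstepping_original} guarantees that for all $t \in [t_j, t_{j+1})$ the cross term does not exceed $\mu R\Vert (\mathcal{K}_j u[t])\Vert^2 = \mu R\Vert w_j[t]\Vert^2$, so on the whole interval
\begin{equation*}
\frac{d}{dt}\Vert w_j[t]\Vert^2 \leq -2\mu(1-R)\Vert w_j[t]\Vert^2,
\end{equation*}
which yields $\Vert w_j[t]\Vert \leq \exp(-\mu(1-R)(t-t_j))\Vert w_j[t_j]\Vert$ for $t \in [t_j, t_{j+1}]$.

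Next I would handle the jumps at the event times. At $t_{j+1}$ the kernel is re-computed, so $w_{j+1}[t_{j+1}] = (\mathcal{K}_{j+1} u[t_{j+1}]) = (\mathcal{K}_{j+1}\mathcal{L}_j w_j[t_{j+1}])$, and \eqref{eq:estimate_Kj}-\eqref{estimates_transformations_G} give $\Vert w_{j+1}[t_{j+1}]\Vert \leq G^2 \Vert w_j[t_{j+1}]\Vert$; actually, since $u[t_{j+1}]$ is common to both representations, a cleaner route is $\Vert w_{j+1}[t_{j+1}]\Vert \leq G\Vert u[t_{j+1}]\Vert \leq G^2\Vert w_j[t_{j+1}]\Vert$, but it is better to carry $u$ directly: $\Vert u[t_{j+1}]\Vert \leq G\Vert w_j[t_{j+1}]\Vert \leq G\exp(-\mu(1-R)(t_{j+1}-t_j))\Vert w_j[t_j]\Vert \leq G^2\exp(-\mu(1-R)(t_{j+1}-t_j))\Vert u[t_j]\Vert$. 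Iterating this from $j=0$ up to the index $N(t)$ with $t \in [t_{N(t)}, t_{N(t)+1})$, and using that on the last (partial) interval we again lose one factor of $G$, one obtains a bound of the form
\begin{equation*}
\Vert u[t]\Vert \leq G^{\,2 N(t)+1}\exp(-\mu(1-R)\,t)\,\Vert u[0]\Vert.
\end{equation*}
Since $N(t) \leq t/\tau$ by the minimal dwell-time $\tau = \frac{1}{\varphi}\frac{\mu R}{G^2}$ of Proposition~\ref{theo:minimal_dwellt_time}, the factor $G^{2N(t)}$ is bounded by $\exp\!\big(\tfrac{2t}{\tau}\ln G\big) = \exp\!\big(\tfrac{2\varphi G^2}{\mu R}\ln(G)\,t\big)$. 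Collecting exponents gives $\Vert u[t]\Vert \leq G\exp(-\sigma t)\Vert u[0]\Vert$ with
\begin{equation*}
\sigma = \mu(1-R) - \frac{2\varphi G^2 \ln(G)}{\mu R},
\end{equation*}
and $\sigma>0$ is precisely the content of the standing hypothesis \eqref{eq:condition_gamma_tau_sigma} (up to the harmless factor of $2$, which one absorbs by being slightly more careful about whether the last partial interval contributes a full factor $G$ or not — tracking this is where a constant might shift, but the threshold on $\varphi$ is of the stated form).

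The main obstacle, and the step I would be most careful with, is the bookkeeping of the accumulated $G$-factors versus the accumulated exponential decay: each event costs a multiplicative $G^2$ (from passing through $u$ between two kernel representations) but buys at least a factor $\exp(-\mu(1-R)\tau)$ of decay over the preceding dwell-time, so one needs $\mu(1-R)\tau > 2\ln G$, i.e. $\mu(1-R)\frac{\mu R}{\varphi G^2} > 2\ln G$, which rearranges to $\varphi < \frac{\mu^2 R(1-R)}{2G^2\ln G}$ — matching \eqref{eq:condition_gamma_tau_sigma} up to the factor $2$. A secondary point requiring care is justifying the energy estimate rigorously: the differentiation of $\Vert w_j[t]\Vert^2$ and the integration by parts are valid on $(t_j,t_{j+1})$ by the regularity $w_j \in C^1((t_j,t_{j+1});L^2(0,1))$ with $w_j[t]\in D$ from Proposition~\ref{Proposition:existance_of_solution_target}, and continuity of $\Vert w_j[\cdot]\Vert$ up to the endpoints lets one pass the differential inequality to the closed interval; the value $\Vert w_j[t_j]\Vert$ in the iteration is the one obtained as the limit from the right at $t_j$ after the kernel update, so no ambiguity arises.
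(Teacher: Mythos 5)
Your proposal follows the paper's proof essentially step for step: the same eigenvalue-based energy estimate \eqref{eq:Wirtinger_target0}, the same absorption of the cross term via the triggering rule \eqref{triggering_conditionISS_with_backstepping_original}, the same per-interval exponential decay, and the same balancing of accumulated transformation constants against accumulated decay using the dwell time of Proposition~\ref{theo:minimal_dwellt_time}. The one substantive difference is the bookkeeping you yourself flag: you charge each event a factor $\tilde K_j\tilde L_j\le G^2$ in the norm (one $G$ from $\Vert w_j[t_j]\Vert\le\tilde K_j\Vert u[t_j]\Vert$ via \eqref{eq:estimate_Kj} and one from $\Vert u[t]\Vert\le\tilde L_j\Vert w_j[t]\Vert$ via \eqref{eq:estimate_Lj}), which leads to the threshold $\varphi<\mu^2R(1-R)/(2G^2\ln G)$ and, if the last partial interval is treated like the full ones, an overshoot $G^2$ rather than $G$. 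The paper instead asserts the per-interval estimate \eqref{eq:estimate_original_system2}, namely $\Vert u[t]\Vert^2\le G^2\exp(-2\mu(1-R)(t-t_j))\Vert u[t_j]\Vert^2$, i.e.\ only one factor of $G$ per event in the norm, which is exactly what produces \eqref{eq:condition_gamma_tau_sigma} without the extra $2$; note that this step implicitly bounds the product $\tilde K_j\tilde L_j$ by $G$ rather than $G^2$, which does not follow from \eqref{estimates_transformations_G} alone since both constants are at least $1$ by construction. So your accounting is the conservative, safely justified one: it establishes global exponential stability with a threshold of the stated form, but under a condition on $\varphi$ that is a factor of $2$ more restrictive than \eqref{eq:condition_gamma_tau_sigma}. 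To recover the exact constants of the theorem you would need either the sharper per-event bound the paper uses or a direct estimate of the composed operator $\mathcal{L}_j$ applied after $\mathcal{K}_j$; all the essential ideas of your argument are otherwise identical to the paper's.
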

 \begin{proof}
 By using the variational characterization of eigenvalues (see  \cite[Section 11.4]{Strauss2008})  in conjunction with \eqref{eq:target_system}-\eqref{eq:target_BC},\eqref{functionalF1},    the following estimate holds for all $t\in (t_j,t_{j+1})$:
 \begin{equation}\label{eq:Wirtinger_target0}
 \frac{d}{dt} \left( \frac{1}{2} \Vert w_j[t] \Vert^2 \right) \leq - \mu \Vert w_j[t] \Vert^2 +  \left\langle w_j[t],(\mathcal{K}_jf_j[t]) \right \rangle
 \end{equation}
 where $\mu= c + \varepsilon \mu_1$ (recall \eqref{eq:parameter_mu} $\mu_1$ being the principal eigenvalue of the Sturm-Liouville operator $B$  \eqref{eq:SL_A}). We can rewrite \eqref{eq:Wirtinger_target0} as follows:
 \begin{equation}\label{eq:Wirtinger_target2}
 \frac{d}{dt} \left( \frac{1}{2} \Vert w_j[t] \Vert^2 \right) \leq - \mu(1-R) \Vert w_j[t] \Vert^2  - \mu R \Vert w_j[t] \Vert^2  +   \Big< w_j[t], (\mathcal{K}_j f_j[t]) \Big>   
 \end{equation}
where $R \in (0,1)$ is the  parameter  involved in   \eqref{triggering_conditionISS_with_backstepping_original}.

  Therefore,  from the definition of the  static  event-triggered gain scheduler,  events are triggered to guarantee,  $  \Big< w_j[t], (\mathcal{K}_j f_j[t]) \Big>   \leq \mu R \Vert w[t] \Vert^2$, for all $t\in (t_j,t_{j+1})$. Then, we obtain for all $t\in [t_j,t_{j+1} )$:
  \begin{equation}\label{eq:Wirtinger_target3}
 \Vert w_j[t] \Vert^2  \leq \exp\left( - 2\mu(1-R) (t-t_j) \right)  \Vert w_j[t_j] \Vert^2  
 \end{equation}
Using \eqref{backstepping _direct_trasf_1}, \eqref{backstepping _inverse_trasf_1}, \eqref{eq:estimate_Kj}-\eqref{eq:G} and \eqref{eq:Wirtinger_target3}, we get:  
\begin{equation}\label{eq:estimate_original_system2}
\Vert u[t] \Vert^2 \leq  G^2 \exp(- 2\mu(1-R)(t- t_j))\Vert u[t_j] \Vert^2 
\end{equation}
for all $t \in [t_j,t_{j+1})$. 
Since $u \in \mathcal{C}^{0}(\mathbb{R}_{+}; L^2(0,1))$, it follows that \eqref{eq:estimate_original_system2} holds for $t=t_{j+1}$ as well, i.e.
\begin{equation}\label{eq:estimate_original_system3}
\Vert u[t_{j+1}] \Vert^2 \leq  G^2\exp(-2\mu(1-R)(t_{j+1}- t_j))\Vert u[t_j] \Vert^2 
\end{equation}
    Now, for all $t\geq 0$, an estimate of $\Vert u[t] \Vert$ in terms of $\Vert u[0] \Vert$ can be derived recursively, by using \eqref{eq:minimal_dwell_time0} and the   fact that there have been   $j $ events and that  $ j \tau$ units of time have (at least) been passed until $t$. 
 To that end,  we can apply induction  on $j$  and prove that, for all $j\geq 0$,
\begin{equation}\label{eq:eq:estimate_original_system3_towards_intialcondition}
\Vert u[t_{j}] \Vert^2 \leq  (G^{2j})\exp(- 2\mu(1-R)t_j)  \Vert u[0] \Vert^2 
\end{equation}
and that $ t_j \geq  j\tau$. Let $j\geq 0$ be given (arbitrary) and $t \in [t_j,t_{j+1})$ (arbitrary).  We obtain from  \eqref{eq:estimate_original_system2},\eqref{eq:estimate_original_system3} and \eqref{eq:eq:estimate_original_system3_towards_intialcondition}:
\begin{equation}
\Vert u[t] \Vert^2 \leq  (G^2)^{j+1} \exp(-2\mu(1-R) t)\Vert u[0] \Vert^2 
\end{equation}
Moreover, since  $j \leq \frac{t}{\tau} $, it holds:
\begin{equation}
\Vert u[t] \Vert^2 \leq  G^2 \exp\left(-\left(2\mu(1-R) - \tfrac{2ln(G)}{\tau}\right) t\right)\Vert u[0] \Vert^2 
\end{equation}
In light of condition \eqref{eq:condition_gamma_tau_sigma} in conjunction with \eqref{eq:minimal_dwell_time0} where  
$\tau = \frac{1}{\varphi}\frac{\mu R}{G^2}$, 
we finally obtain:
\begin{equation}
\Vert u[t] \Vert \leq G \exp(-\sigma t) \Vert u[0] \Vert, \quad \text{for all} \quad t\geq 0
\end{equation}
with $\sigma=\frac{\mu^2 R(1-R) - \varphi G^2 \ln(G)}{\mu R} >0 $.  
 This concludes the proof. 
 \end{proof}
 \begin{remark}
Notice that  $\mu^2 R(1-R) - \varphi G^2 \ln(G) >0 $  holds true  provided that $\varphi$ is sufficiently small (this corresponds to the case where $\lambda(t,x)$ is slowly time-varying coefficient).  
In addition, it is worth remarking that we can select $R=\frac{1}{2}$ in order to maximize the allowable upper bound $\varphi$. Nevertheless, different values of $R$ may be used in practice since the obtained estimates are conservative.
The proof of Theorem 2 provides a (conservative) explicit estimate of the convergence rate $\sigma>0$. The obtained estimate shows that the smaller $\varphi$ is (i.e., the slower the change of the reaction coefficient), the higher the convergence rate is.  
\end{remark}

\subsubsection{Event-triggered gain scheduling with a dynamic triggering condition} 

 \begin{theorem}\label{main_theorem_EBC_backstepping_Dynamic}
If condition \eqref{eq:condition_gamma_tau_sigma} of Theorem \ref{main_theorem_EBC_backstepping} holds, then,   the  closed-loop system  \eqref{eq:sysparabolic0}-\eqref{IC_parabolic_PDE_u0}  with control \eqref{eq:control_feedback}  or \eqref{eq:control_feedback_Neumann actuation},  under dynamic  event-triggered gain scheduler    \eqref{triggering_conditionISS_with_backstepping_original_Dynamic}-\eqref{eq:internal_dynamic_variable}  is globally exponentially stable. More specifically, there exists a constant $\sigma>0$  
such that:
\begin{equation}\label{eq:L2_GES_2}
\Vert u[t] \Vert \leq G \exp(-\sigma t) \Vert u[0] \Vert, \quad \text{for all} \quad t\geq 0
\end{equation}
 \end{theorem}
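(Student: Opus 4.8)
The plan is to reduce the dynamic case to the static one by absorbing the dynamic variable $m$ into an augmented energy functional, after which the proof of Theorem~\ref{main_theorem_EBC_backstepping} applies essentially verbatim. Fix $j\geq 0$ and work on the interval $[t_j,t_{j+1})$, writing $w_j[t]=(\mathcal{K}_j u[t])$ as in \eqref{backstepping _direct_trasf_1}. Exactly as in the static proof, applying the variational characterization of the principal eigenvalue $\mu_1$ of the Sturm--Liouville operator $B$ in \eqref{eq:SL_A} to the target system \eqref{eq:target_system}--\eqref{eq:target_BC} with \eqref{functionalF1} yields the differential inequality \eqref{eq:Wirtinger_target0}, which I would rewrite as
\[
\frac{d}{dt}\Bigl(\tfrac12\|w_j[t]\|^2\Bigr)\leq -\mu(1-R)\|w_j[t]\|^2-q(t),\qquad q(t):=\mu R\|w_j[t]\|^2-\bigl\langle w_j[t],(\mathcal{K}_j f_j[t])\bigr\rangle,
\]
with $\mu=c+\varepsilon\mu_1$ as in \eqref{eq:parameter_mu}.

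The key new step is to introduce the augmented functional $V(t):=\tfrac12\|w_j[t]\|^2+m(t)$ on $[t_j,t_{j+1})$. Differentiating and using the ODE \eqref{eq:internal_dynamic_variable}, whose right-hand side is precisely $-\eta m(t)+q(t)$, the terms $\pm q(t)$ cancel and one obtains $\dot V(t)\leq -\mu(1-R)\|w_j[t]\|^2-\eta m(t)$. Since $\eta\geq 2\mu(1-R)$ by Definition~\ref{Definition_event_based_sheduler_Dynamic} and $m(t)\geq 0$ by Lemma~\ref{Lemma_dynamic_variable}, this gives $\dot V(t)\leq -2\mu(1-R)V(t)$, hence $V(t)\leq \exp\bigl(-2\mu(1-R)(t-t_j)\bigr)V(t_j)$. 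Because $m(t_j)=0$ we have $V(t_j)=\tfrac12\|w_j[t_j]\|^2$, and because $m(t)\geq 0$ we have $\tfrac12\|w_j[t]\|^2\leq V(t)$; combining these yields
\[
\|w_j[t]\|^2\leq \exp\bigl(-2\mu(1-R)(t-t_j)\bigr)\,\|w_j[t_j]\|^2,\qquad t\in[t_j,t_{j+1}),
\]
which is exactly estimate \eqref{eq:Wirtinger_target3} of the static proof. I expect this absorption of the dynamic variable (relying on $m\geq 0$ and on $\eta\geq 2\mu(1-R)$) to be the only genuinely new point; everything downstream is identical to the static argument.

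From here the remainder proceeds as in Theorem~\ref{main_theorem_EBC_backstepping}: using the bounded invertibility of the backstepping transformation through \eqref{eq:estimate_Kj}--\eqref{eq:G} converts the last display into $\|u[t]\|^2\leq G^2\exp\bigl(-2\mu(1-R)(t-t_j)\bigr)\|u[t_j]\|^2$ for $t\in[t_j,t_{j+1}]$ (the endpoint by continuity of $t\mapsto u[t]$). By Proposition~\ref{proposition:first_triggering_time} the inter-execution times of the dynamic scheduler are no smaller than those of the static scheduler, so Proposition~\ref{theo:minimal_dwellt_time} still guarantees $t_{j+1}-t_j\geq\tau$ with $\tau=\tfrac{1}{\varphi}\tfrac{\mu R}{G^2}$, hence $t_j\geq j\tau$. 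An induction on $j$ then gives $\|u[t_j]\|^2\leq G^{2j}\exp(-2\mu(1-R)t_j)\|u[0]\|^2$, so for $t\in[t_j,t_{j+1})$, using $j\leq t/\tau$,
\[
\|u[t]\|^2\leq G^{2}\exp\!\left(-\Bigl(2\mu(1-R)-\tfrac{2\ln G}{\tau}\Bigr)t\right)\|u[0]\|^2 .
\]
Condition \eqref{eq:condition_gamma_tau_sigma}, i.e. $\varphi<\mu^2R(1-R)/(G^2\ln G)$, makes the exponent strictly negative, and substituting $\tau=\tfrac{1}{\varphi}\tfrac{\mu R}{G^2}$ yields \eqref{eq:L2_GES_2} with $\sigma=\dfrac{\mu^2R(1-R)-\varphi G^2\ln G}{\mu R}>0$, which completes the proof.
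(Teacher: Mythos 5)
Your proposal is correct and follows essentially the same route as the paper's proof: the augmented functional $V(t)=\tfrac12\|w_j[t]\|^2+m(t)$ is exactly the paper's $W(t)$, the cancellation of the cross term via the ODE \eqref{eq:internal_dynamic_variable} together with $m\geq 0$ (Lemma~\ref{Lemma_dynamic_variable}) and $\eta\geq 2\mu(1-R)$ reproduces the paper's bound $\dot W\leq -2\mu(1-R)W$, and the downstream argument (dwell time via Propositions~\ref{proposition:first_triggering_time} and~\ref{theo:minimal_dwellt_time}, then the static-case induction) matches the paper's reduction to Theorem~\ref{main_theorem_EBC_backstepping}.
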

 \begin{proof}


An estimate of the time-derivative of the following function $W(t):=\frac{1}{2}\Vert w_j[t] \Vert^2 + m(t)$ along the solutions \eqref{eq:target_system}-\eqref{eq:target_BC},\eqref{functionalF1},\eqref{eq:internal_dynamic_variable} is given by:
\begin{equation}
\dot{W}(t) \leq  - \mu \Vert w_j[t] \Vert^2 +   \Big< w_j[t], (\mathcal{K}_j f_j[t]) \Big>    - \eta m(t) -  \Big< w_j[t], (\mathcal{K}_j f_j[t]) \Big>   +\mu R \Vert w_j[t] \Vert^2
\end{equation}
which can be rewritten as follows:
\begin{equation}
\dot{W}(t) \leq  - \mu(1-R)  \left( \Vert w_j[t] \Vert^2  + 2m(t) \right)  - m(t) (\eta
 - 2\mu (1-R))
\end{equation}
By Lemma \ref{Lemma_dynamic_variable},  we guarantee that $ m(t) \geq 0 $ and since  $\eta  \geq  2\mu(1-
R)$ (recall Definition \ref{Definition_event_based_sheduler_Dynamic}), thus we get:
\begin{equation}\label{eq:stric_decreasing_W}
\dot{W}(t) \leq  - 2 \mu(1-R) W(t)  
\end{equation}
Therefore,  we obtain for $t\in [t_j,t_{j+1})$:
\begin{equation}\label{eq:Wirtinger_target3_dynamic2}
\frac{1}{2}\Vert w_j[t] \Vert^2 +  m(t) \leq  \exp(- 2\mu(1-R)(t-t_j ))  \left( \frac{1}{2}\Vert w_j[t_j] \Vert^2  + m(t_j) \right)  
\end{equation}
Notice that $\frac{1}{2}\Vert w_j[t] \Vert^2  \leq \frac{1}{2}\Vert w_j[t] \Vert^2 +  m(t) $ and that by Definition \ref{Definition_event_based_sheduler_Dynamic}, $m(t_j) =0$. Therefore, from \eqref{eq:Wirtinger_target3_dynamic2} we have, for all $t\in [t_j,t_{j+1})$: 
\begin{equation}
\Vert w_j[t] \Vert^2  \leq  \exp(- 2\mu(1-R)(t-t_j)) \Vert w_j[t_j] \Vert^2    
\end{equation}
The remaining part of the proof follows the same reasoning as the proof of Theorem \ref{main_theorem_EBC_backstepping}   (see from \eqref{eq:estimate_original_system2}).  This concludes the proof.
\end{proof}
\begin{remark}
The function $W(t)$ is monotonically decreasing  (see \eqref{eq:stric_decreasing_W}), for all $t\in [t_j, t_{j+1})$. However, the function  $\frac{1}{2}\Vert w_j[t] \Vert^2$ may not be  monotonically decreasing on that interval. The design parameter $\theta$ involved in the dynamic event-triggering condition \eqref{triggering_conditionISS_with_backstepping_original_Dynamic}-\eqref{eq:internal_dynamic_variable} and also discussed in Remark \ref{remark:paramteres_two_event_strategies}, allows to limit the potential increase of $\frac{1}{2}\Vert w_j[t] \Vert^2$.  Indeed, since events are triggered to guarantee  $ \frac{1}{\theta}m(t)  + \mu R   \Vert (\mathcal{K}_j u[t]) \Vert^2 - \Big< (\mathcal{K}_j u[t]), (\mathcal{K}_j f_j[t]) \Big>  \geq 0$, it holds that:
\begin{equation*}
 \frac{d}{dt} \left( \frac{1}{2} \Vert w_j[t] \Vert^2 \right) \leq - \mu(1-R) \Vert w_j[t] \Vert^2 +  \frac{1}{\theta}m(t)
 \end{equation*}
Notice that the larger the value of $\theta$, the more limited the increase. We approach then to the case as we were  dealing with the static event-triggered gain scheduler.
\end{remark}
\section{Numerical simulations}\label{numerical_simulation}
We illustrate the results by considering  \eqref{eq:sysparabolic0}-\eqref{IC_parabolic_PDE_u0}  with $\varepsilon= 1$, $q=+\infty$  and initial condition   $u_0(x) = 2(x - x^2)$. For numerical simulations, the state of the system has been discretized by divided differences on a uniform grid with the step $h=0.02$ for the space variable. The discretization with respect to time was done using the implicit Euler scheme with step size $\Delta t=h^2 $. We run simulations on a frame of 2s.
 We choose the time- and space- varying coefficient $\lambda(t,x)$ to have  a simple form as $ \lambda(t,x)= \lambda_{c} + \lambda_{t}(t) + \lambda_{x}(x)$. More specifically:
\begin{equation}\label{lambda_ex1}
\lambda(t,x) = 10  + \frac{50}{\cosh^2(5(t-1))}+ 7\cos(5\pi t) + \frac{50}{\cosh^2(5x)}, \quad t>0,\quad  x \in  [0,1] 
\end{equation}
which has a profile depicted in Figure \ref{profile_reaction-term}.
\begin{figure*}[t]
\centering{
\subfigure{\includegraphics[width=0.52\columnwidth]{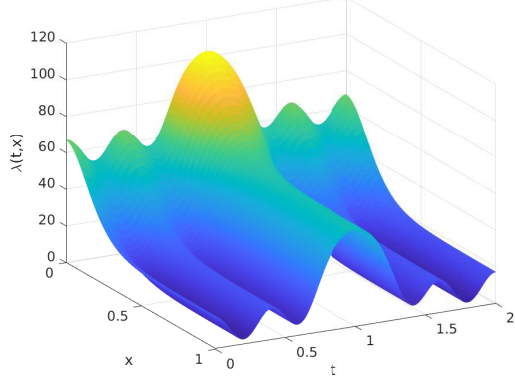} }
\caption{Profile of  the time- and space- varying reaction coefficient $\lambda(t,x)=10  + \frac{50}{\cosh^2(5(t-1))}+ 7\cos(5\pi t) + \frac{50}{\cosh^2(5x)}$ for the reaction-diffusion system \eqref{eq:sysparabolic0}-\eqref{IC_parabolic_PDE_u0}. }
\label{profile_reaction-term}
}
\end{figure*}
We  stabilize the closed-loop system \eqref{eq:sysparabolic0}-\eqref{IC_parabolic_PDE_u0}  under Dirichlet actuation with  boundary control \eqref{eq:control_feedback}    whose
kernel gains satisfy \eqref{kernelPDEntimesn-equidifussivityComponentform}-\eqref{kernelPDEntimesn-equidifussivityComponentformBC2}  and are scheduled according to the two event-triggered mechanisms we introduced in Definition \ref{Definition_event_based_sheduler}  (static-based triggering condition) and Definition \ref{Definition_event_based_sheduler_Dynamic} (dynamic-based triggering condition).  

The parameters of the triggering conditions are set   $R=0.15$, $\mu = c + \varepsilon \pi^2 = \pi^2$ with $c=0$. In addition,  $\eta =16.7$  and  $\theta=0.15$.\\   
From \eqref{eq:space-varying_reaction_coeff},  $b_j(x) = \lambda(t_j,x) = \tilde{\lambda}_j + \frac{50}{\cosh^2(5x)} $, where $\tilde{\lambda}_j:=10  + \frac{50}{\cosh^2(5(t_j-1))} + 7\cos(5 \pi t_j) $ with $\{t_j\}_{j\in \mathbb{N}}$ according to  \eqref{triggering_conditionISS_with_backstepping_original} (static)  or \eqref{triggering_conditionISS_with_backstepping_original_Dynamic}-\eqref{eq:internal_dynamic_variable} (dynamic). In either cases,   kernels $K_j$ satisfying \eqref{kernelPDEntimesn-equidifussivityComponentform}-\eqref{kernelPDEntimesn-equidifussivityComponentformBC2}, for all $t\in [t_j,t_{j+1})$, admit a closed-form solution which is given as follows \cite[Section VIII.E]{Smyshlyaev-Krstic2004}:
\begin{equation}\label{Kernel_example1}
K_j(x, y)=- \tilde{\lambda}_j y \frac{I_{1}\left(\sqrt{\tilde{\lambda}_j\left(x^{2}-y^{2}\right)}\right)}{\sqrt{\tilde{\lambda}_j\left(x^{2}-y^{2}\right)}}  -5 \tanh (5 y) I_{0}\left(\sqrt{\tilde{\lambda}_j\left(x^{2}-y^{2}\right)}\right)
\end{equation}
where $I_{m}(\cdot)$,  $m \in \mathbb{Z}$  is a modified Bessel function of the first kind of order $m$.
\begin{figure*}[t]
\centering{
\subfigure{\includegraphics[width=0.49\columnwidth]{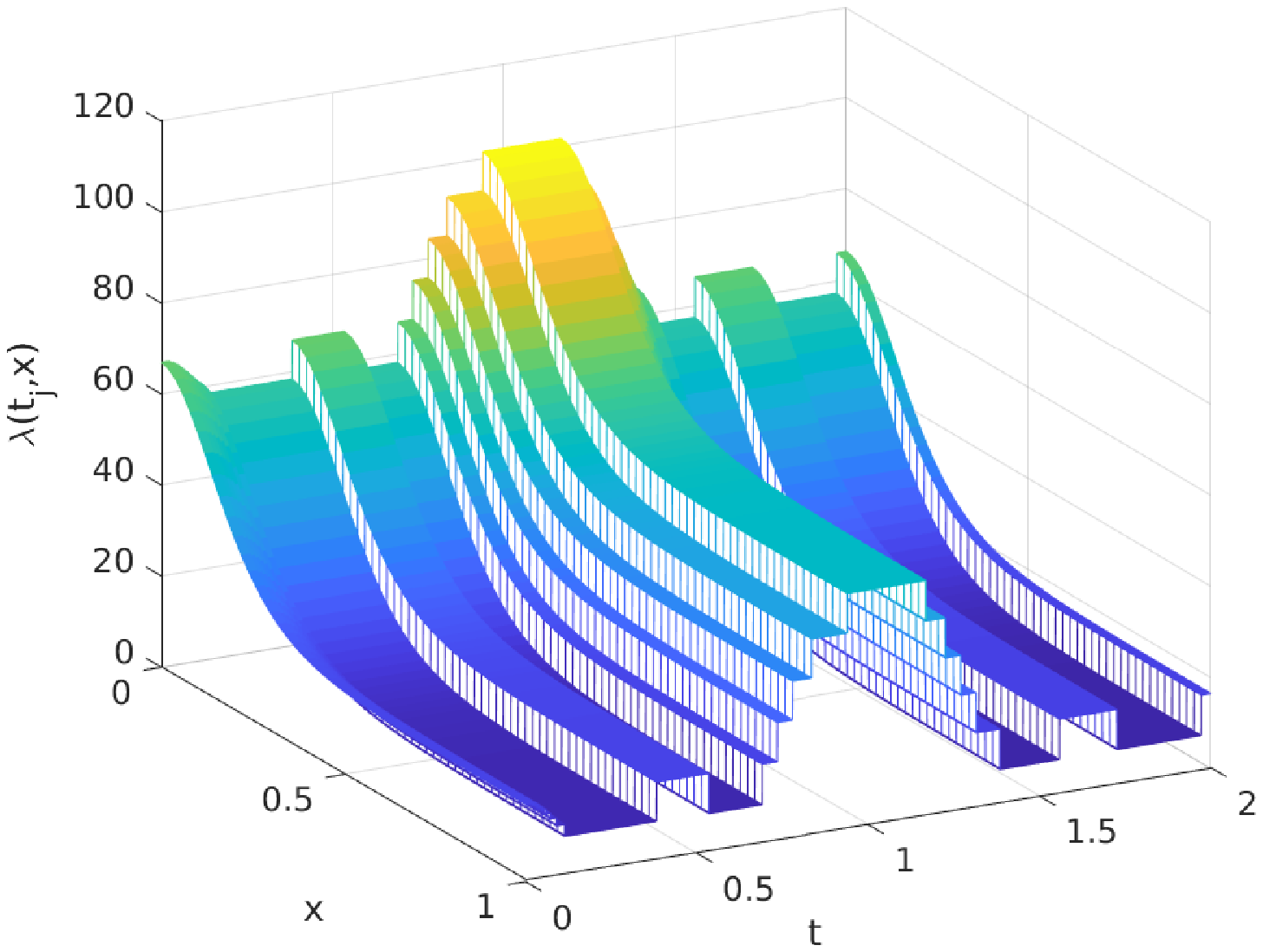}}
\subfigure{\includegraphics[width=0.49\columnwidth]{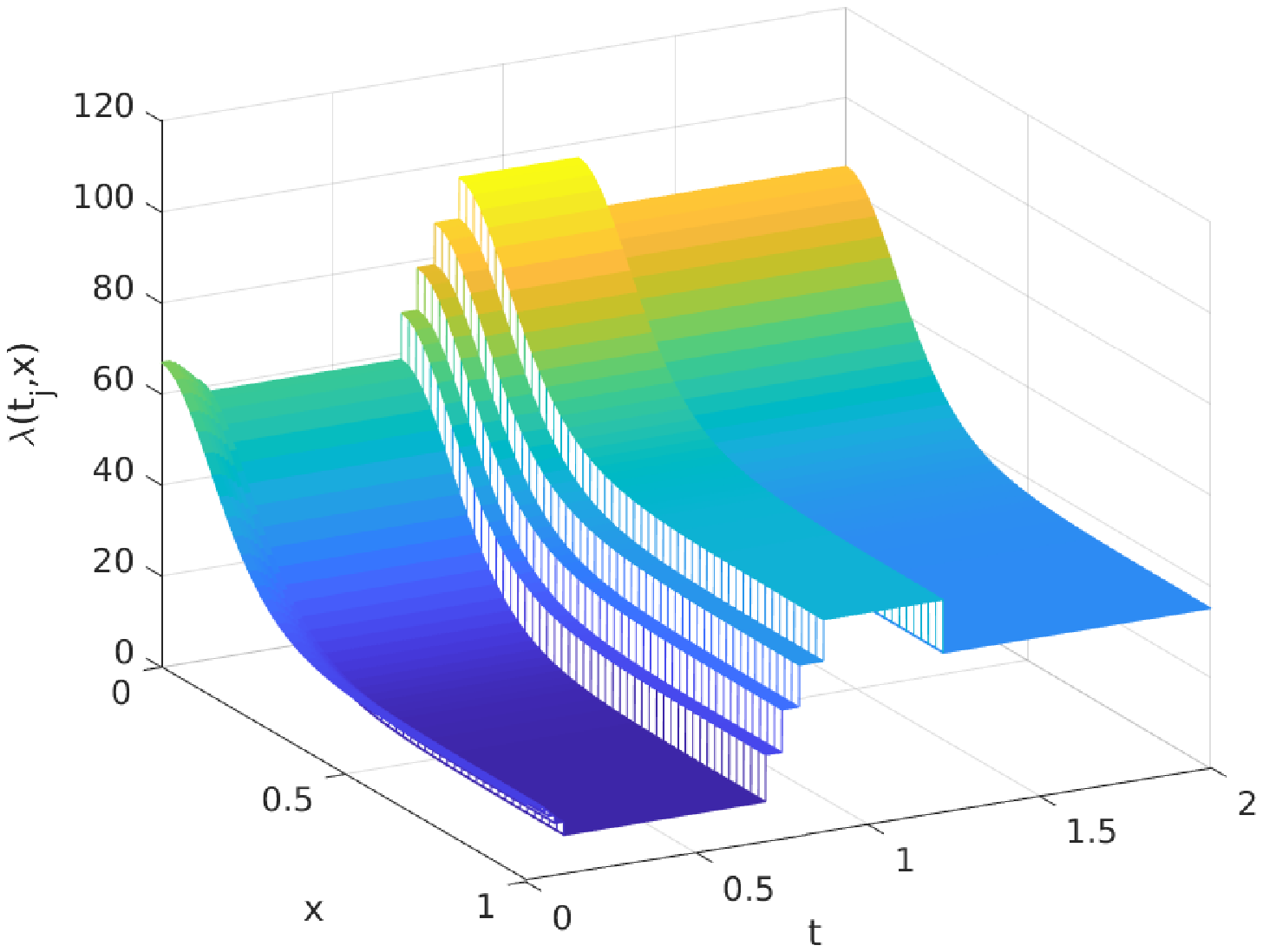}}
\caption{Sampled version of the profile of  the time- and space- varying reaction coefficient \eqref{lambda_ex1}, i.e.  $\lambda(t_j,x)$ for all $\{t_j\}_{j\in \mathbb{N}}$ according to the  static event-triggered gain scheduler  \eqref{triggering_conditionISS_with_backstepping_original} (depicted on the left)    and the  dynamic event-triggered gain scheduler \eqref{triggering_conditionISS_with_backstepping_original_Dynamic}-\eqref{eq:internal_dynamic_variable} (depicted on the right).} 
\label{profile_reaction-terms_sampled}
}
\end{figure*}
\noindent Figure \ref{profile_reaction-terms_sampled} shows the  event-triggered sampled version of the profile of  the time- and space- varying reaction coefficient \eqref{lambda_ex1} for all $t \in [t_j,t_{j+1})$, $j\geq 0$. according to the  static event-triggered gain scheduler  \eqref{triggering_conditionISS_with_backstepping_original} (depicted on the left)    and the  dynamic event-triggered gain scheduler  \eqref{triggering_conditionISS_with_backstepping_original_Dynamic}-\eqref{eq:internal_dynamic_variable} (depicted on the right).
  Hence,  the kernel updating is done on events and aperiodically. One of the main features of this approach is that the kernel of the control  does not need to be computed using the method of successive approximations to solve a PDE kernel  which involves  a time- and space- varying coefficient. As motivated throughout the paper,   it suffices to schedule the kernel in a suitable way and only when needed  while using  a simpler kernel (in some cases admitting closed-form solution). 

Figures \ref{L2_norm} and \ref{control_evolution} show the time-evolution of the $L^2$ norm of the closed-loop system  \eqref{eq:sysparabolic0}-\eqref{IC_parabolic_PDE_u0}, \eqref{lambda_ex1}  and the time-evolution of the  boundary  control \eqref{eq:control_feedback}, respectively,     under static event-triggered gain scheduler \eqref{triggering_conditionISS_with_backstepping_original} (red dashed line) and  dynamic event-triggered gain scheduler \eqref{triggering_conditionISS_with_backstepping_original_Dynamic}-\eqref{eq:internal_dynamic_variable} (blue line).   For both figures, on  the right, there are  zooms of the two curves to illustrate the difference. It can be observed that under the two strategies, the behavior is similar  with same theoretical guarantees. 
\begin{figure*}[h]
\centering{
\subfigure{\includegraphics[width=0.48\columnwidth]{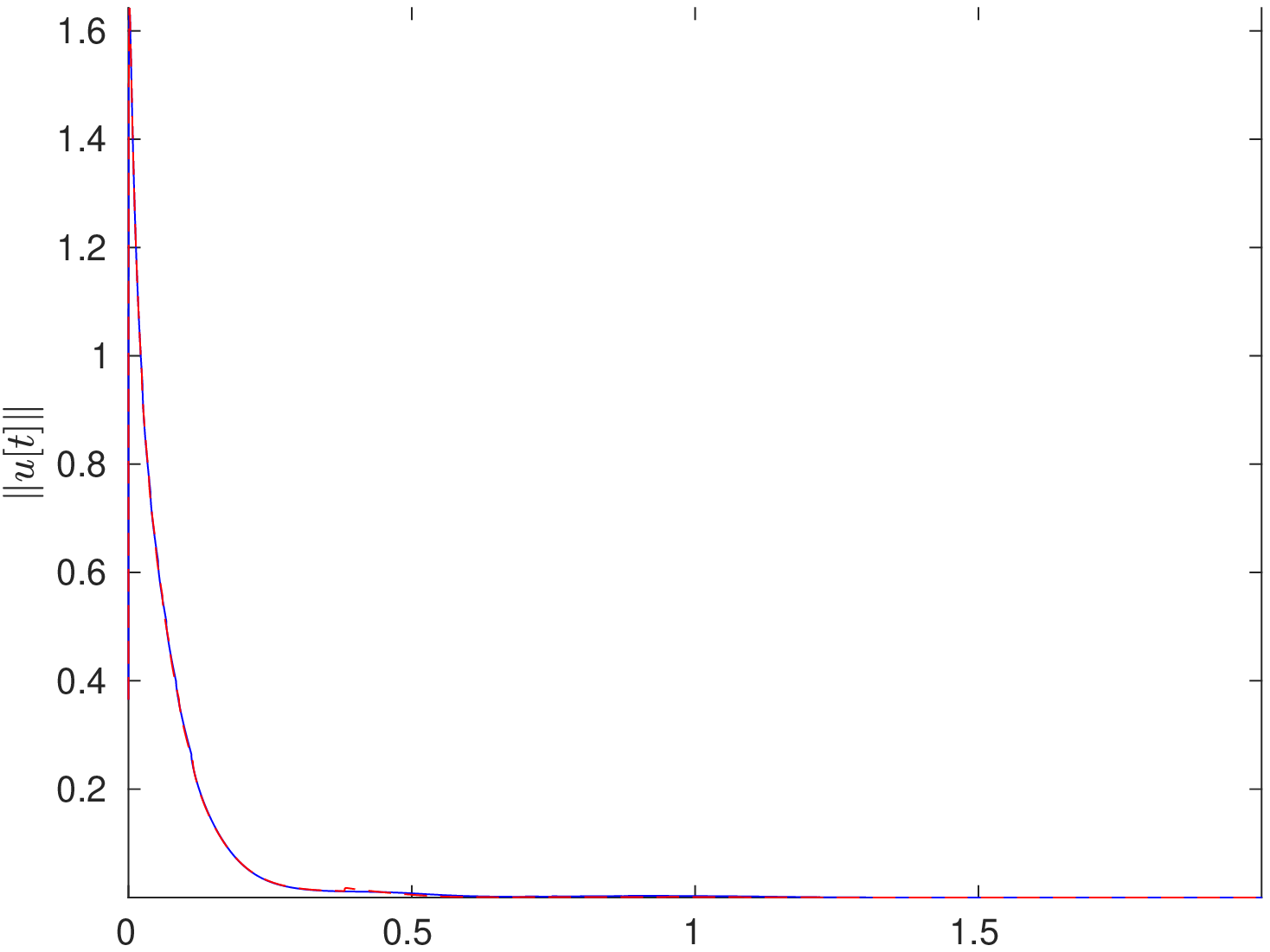} }
\subfigure{\includegraphics[width=0.48\columnwidth]{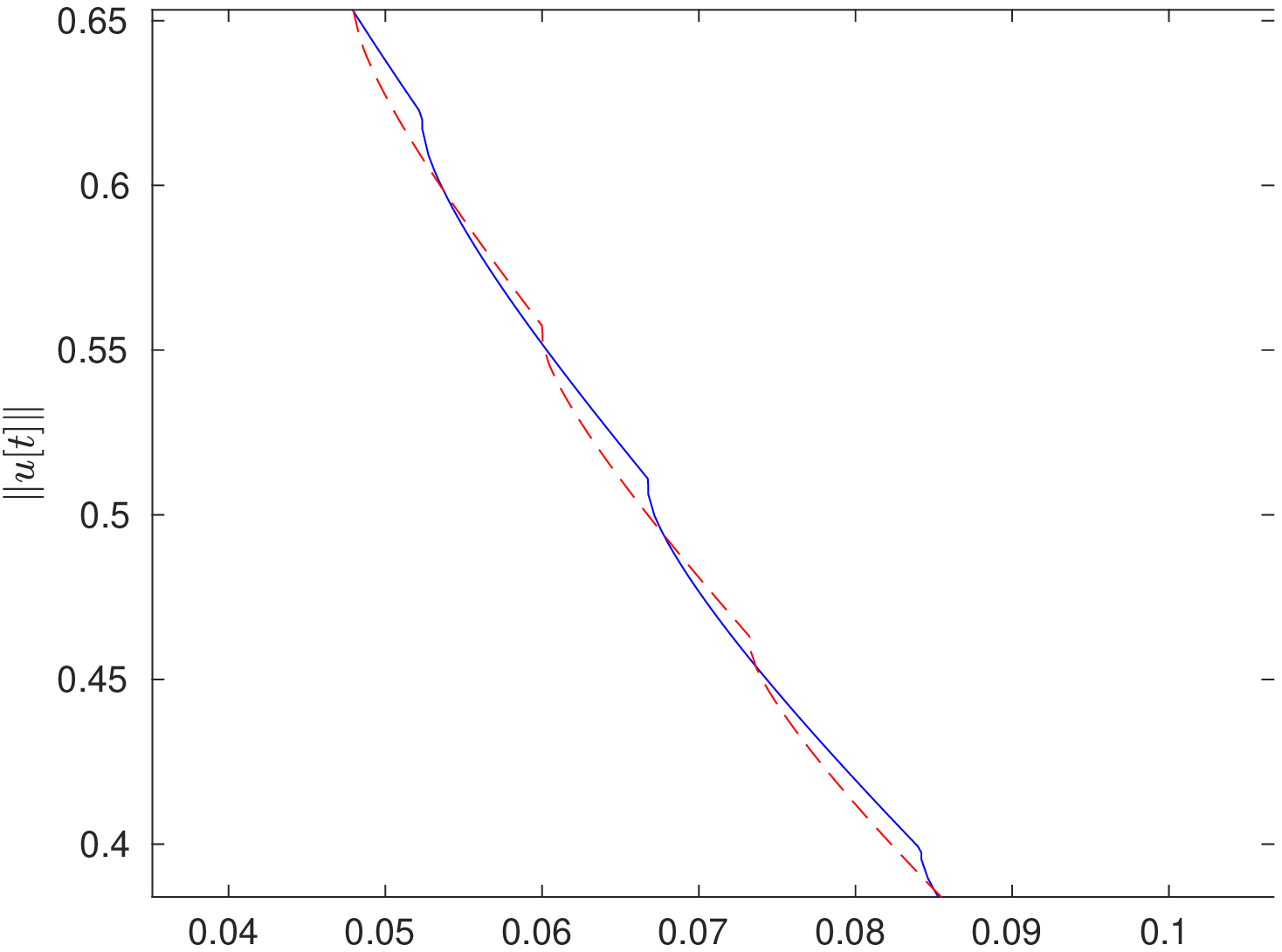} }
\caption{Time-evolution of the $L^2$ norm of the closed-loop system  \eqref{eq:sysparabolic0}-\eqref{IC_parabolic_PDE_u0}, \eqref{lambda_ex1}  with boundary  control    \eqref{eq:control_feedback} under static event-triggered gain scheduler \eqref{triggering_conditionISS_with_backstepping_original} (red dashed  line) and  dynamic event-triggered gain scheduler \eqref{triggering_conditionISS_with_backstepping_original_Dynamic}-\eqref{eq:internal_dynamic_variable} (blue line). On the right, there is a zoom of the two curves to illustrate the difference.} 
\label{L2_norm}
}
\end{figure*}
\begin{figure*}[t]
\centering{
\subfigure{\includegraphics[width=0.48\columnwidth]{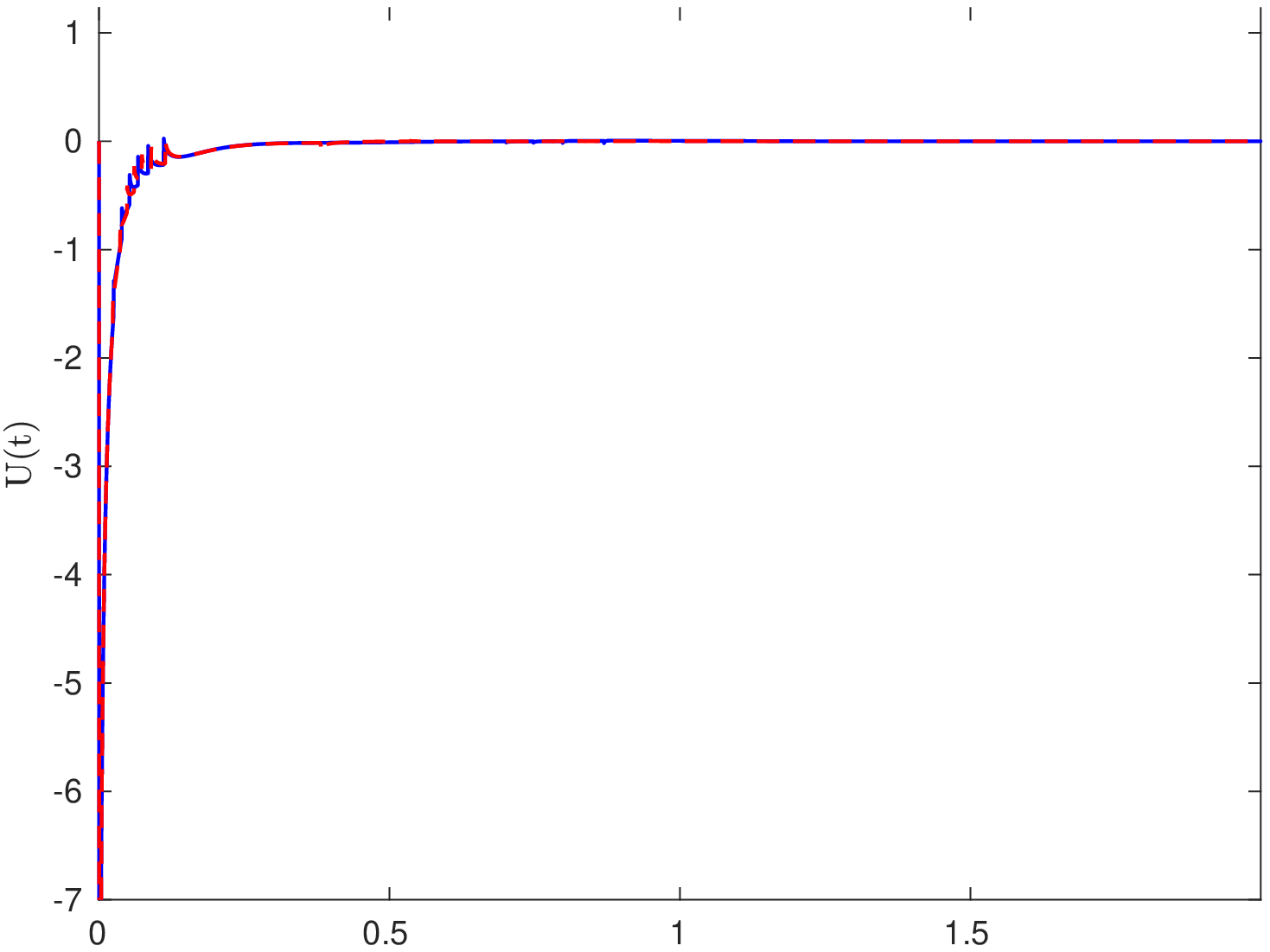} }
\subfigure{\includegraphics[width=0.48\columnwidth]{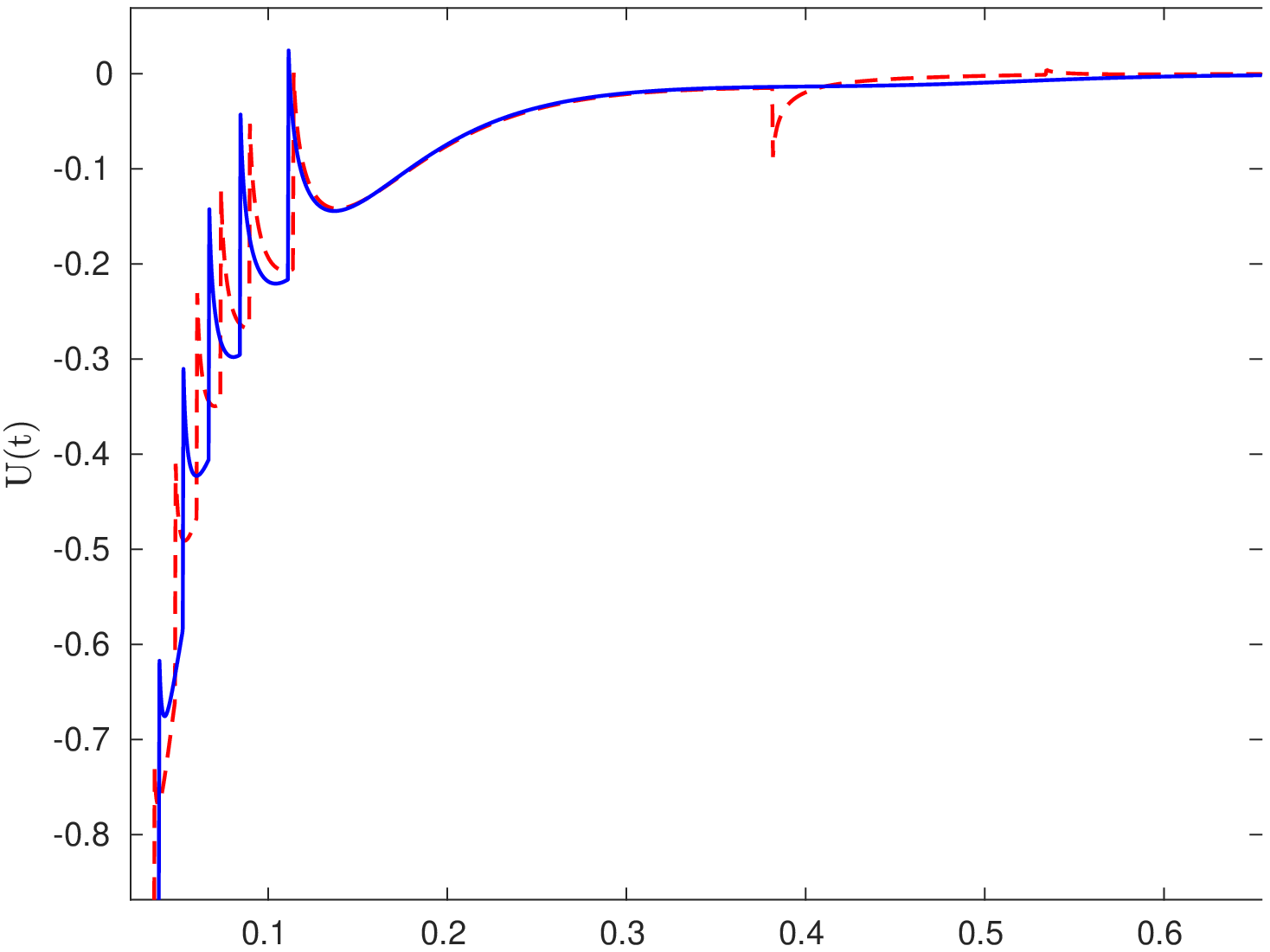} }
\caption{Time-evolution of the boundary control  under static event-triggered gain scheduler \eqref{triggering_conditionISS_with_backstepping_original} (red dashed line) and under  dynamic event-triggered gain scheduler \eqref{triggering_conditionISS_with_backstepping_original_Dynamic}-\eqref{eq:internal_dynamic_variable} (blue line). On the right, there is a zoom of the two curves to illustrate the difference. } 
\label{control_evolution}
}
\end{figure*}
Finally, we run simulations for $100$ different initial conditions given by $u_0(x)= \sqrt{2/n}\sin(\sqrt{n}\pi x) + \sqrt{n}(x -x^2) $,  for $n=1,\ldots,100$ on a frame of $2$s. We compare the static event triggered mechanism with respect to  the dynamic one while computing the inter-execution times  between two triggering times. We compare several  cases by tuning  different parameters.  For all cases, $\eta$ is selected as $\eta =2\mu(1-R)$.  
The mean value of the numbers of events generated under the two strategies is reported in Table \ref{Table_interexecution_times_Mean_valuenumber_events}. The mean value and  coefficient of variation (ratio between the standard deviation and the mean value)
of inter-execution times for both approaches are reported in Tables  \ref{Table_interexecution_times_mean_interexecutions} and \ref{Table_interexecution_times_variability}, respectively.
In addition, Figure \ref{histograms} shows the density of
the inter-execution times  (axis in  logarithmic scale). The red bars correspond to the inter-execution times under  the static event triggered mechanism \eqref{triggering_conditionISS_with_backstepping_original};  whereas  the blue bars     correspond to  the dynamic event triggered mechanism \eqref{triggering_conditionISS_with_backstepping_original_Dynamic}-\eqref{eq:internal_dynamic_variable} resulting in larger inter-execution times.
Therefore, it can be asserted that, as expected, with the dynamic triggering condition one obtains larger inter-execution times and we can reduce the number of events rendering the strategy slightly less conservative. In general, dynamic event-triggered strategies may offer benefits with respect to static strategies as in the framework of even-triggered control (in finite and infinite dimensional settings).

\begin{figure*}[t]
\centering{
\subfigure{\includegraphics[width=0.8\columnwidth]{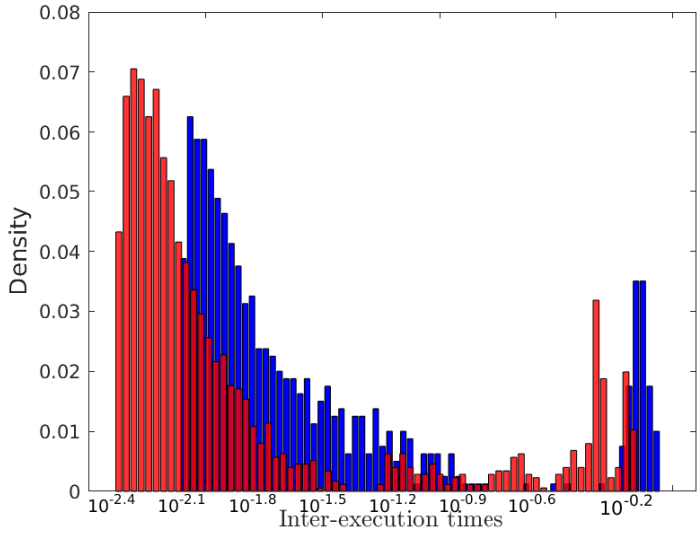} }
\caption{Density of the inter-execution times (axis in logarithmic scale)  computed for 100 different initial conditions given by
$u_0(x)= \sqrt{2/n}\sin(\sqrt{n}\pi x) + \sqrt{n}(x -x^2) $,  for $n=1,\ldots,100$ on a frame of  $2$s. The parameters of the event-triggered strategies are: $R=0.5$, $\eta=9.86 $ and $\theta=0.015$.
The red bars correspond to the density of inter-execution times under  the static event triggered mechanism \eqref{triggering_conditionISS_with_backstepping_original}; whereas  the blue bars  correspond to  the dynamic event triggered mechanism \eqref{triggering_conditionISS_with_backstepping_original_Dynamic}-\eqref{eq:internal_dynamic_variable} resulting in larger inter-execution times.}
\label{histograms}
}
\end{figure*}
\begin{table}[h]
\caption{Mean value of the number of events generated under the  static event-triggered gain scheduler  \eqref{triggering_conditionISS_with_backstepping_original} and under  dynamic event-triggered gains scheduler \eqref{triggering_conditionISS_with_backstepping_original_Dynamic}-\eqref{eq:internal_dynamic_variable}.  \medskip} 
\centering{ 
\begin{tabular}{|c||c|c|c|}
\hline
& $R = 0.15$, $\eta = 16.7$ & $R=0.5$ $\eta =9.86$ \\
\hline  
Static ET                      & 39.93  & 18.58  \\
\hline 
Dynamic ET ($\theta =100$)      &  37.08 & 17.6 \\
 
\hline
Dynamic ET ($\theta =1$)       & 29.8 & 16.02 \\
\hline
Dynamic ET  ($\theta =0.015$)  & 17.01 & 8.99 \\
\hline 
\end{tabular} 
\label{Table_interexecution_times_Mean_valuenumber_events}
}
\end{table}
\begin{table}[h]
\caption{Mean value  of inter-execution times for static event-triggered gain scheduler  \eqref{triggering_conditionISS_with_backstepping_original} and for  dynamic event-triggered gains scheduler \eqref{triggering_conditionISS_with_backstepping_original_Dynamic}-\eqref{eq:internal_dynamic_variable}.  \medskip} 
\centering{ 
\begin{tabular}{|c||c|c|c|}
\hline
& $R = 0.15$, $\eta = 16.7$ & $R=0.5$ $\eta =9.86$ \\
\hline 
Static ET                     & 0.0460 & 0.0738 \\
\hline 
Dynamic ET ($\theta =100$)     &  0.0354 & 0.0521 \\
\hline
Dynamic ET ($\theta =1$)       & 0.0374 &  0.0582 \\ 
\hline

Dynamic ET ($\theta =0.015$)  & 0.0546  & 0.1112 \\ 
\hline 
\end{tabular} 
\label{Table_interexecution_times_mean_interexecutions}
}
\end{table}
\begin{table}[h]
\caption{Coefficient of variation  of inter-execution times for static event-triggered gain scheduler  \eqref{triggering_conditionISS_with_backstepping_original} and for  dynamic event-triggered gains scheduler \eqref{triggering_conditionISS_with_backstepping_original_Dynamic}-\eqref{eq:internal_dynamic_variable}.  \medskip} 
\centering{ 
\begin{tabular}{|c||c|c|c|}
\hline
& $R = 0.15$, $\eta = 16.7$ & $R=0.5$ $\eta =9.86$ \\
\hline 
Static ET                     & 1.9814 & 2.203  \\ 
\hline 
Dynamic ET ($\theta =100$)      & 2.110  & 2.693  \\ 
\hline
Dynamic ET ($\theta =1$)       & 2.6251 &  2.8905  \\
\hline
 
Dynamic ET ($\theta =0.015$)   & 2.3457 & 2.0965 \\
\hline 
\end{tabular} 
\label{Table_interexecution_times_variability}
}
\end{table}
\section{Conclusion}\label{conslusion_and_perspect}

In this paper, we have addressed the problem of exponential stabilization of  a reaction-diffusion PDE   with time- and space- varying reaction coefficient. The boundary control design relies on the backstepping method and the gains are computed/updated on events according to  two event-triggered gain scheduling schemes. 
Two event-triggered strategies are prosed for gain scheduling: static and dynamic. The latter involves a dynamic variable that can be viewed as the filtered value of the static one. It has been observed that under this strategy it is possible to reduce the number of events for the gain scheduling.
We show that under the two proposed event-triggered gain scheduling schemes   Zeno behavior is avoided, which allows to prove  well-posedness  as well as   the exponential stability of the closed-loop system.
 
Our approach   can be seen as an efficient way of kernel computation as it is scheduled aperiodically,  when needed  and relying on the current state information of the closed-loop system. This work constitutes an effort towards the ``robustification" of boundary controllers designed under backstepping method.

In future work, we  expect to   combine this result with event-triggered control strategies for boundary controlled reaction-diffusion PDEs systems recently introduced in \cite{Espitia_Karafyllis_Krstic2019} (which deals with  constant reaction coefficient only). The results in this paper may suggest that the triggering times for  gain scheduling may be synchronized with the time instants for control updating. The control is going to be piecewise constant and not piecewise continuous as in the present work. This would represent a more realistic way of actuation on the PDE system towards digital realizations.

\appendix 

\section{Proof ot Theorem \ref{Theore:notion_of_solution_general_setting}}\label{Proof_Theorem_well_posedness}

\begin{proof}
It suffices to show that there exists $k>0$ such that for each $w_{0} \in L^{2}(0,1)$ and $T>0$ the initial
value problem
\begin{equation}\label{eq:initial_value_problem_general_setting_notion_solution}
\begin{aligned}
\dot{y}[t]+(A+k I) y[t]&=\mathcal{F}(t) y[t]\\
y[0]&=w_{0}
\end{aligned}
\end{equation}
has a unique classical solution on $[0, T]$ in the sense described in  \cite{Pazy1983:book}  where $A: D \rightarrow L^{2}(0,1)$ is the Sturm-Liouville operator defined by the following formula for every $f \in D:$
\begin{equation}
(A f)(x)=-\varepsilon f^{\prime \prime}(x)+c f(x), \text { for } x \in(0,1)
\end{equation}
Notice that any solution $y[t]$ of \eqref{eq:initial_value_problem_general_setting_notion_solution} provides a solution of the initial-boundary value problem \eqref{eq:parabolic_pDE_general_setting_notion_solution} with \eqref{eq:initial_condition_general_seeting_notion_solution} by means of the formula $w[t]=\exp (k t) y[t]$ and any solution $w[t]$ of the initial boundary value problem \eqref{eq:parabolic_pDE_general_setting_notion_solution} with \eqref{eq:initial_condition_general_seeting_notion_solution} provides a solution of the initial value problem \eqref{eq:initial_value_problem_general_setting_notion_solution} by means of the formula $y[t]=\exp (-k t) w[t]$.

Since $-A$ is the infinitesimal generator of a $C_{0}$ semigroup $S(t), t \geq 0$ on $L^{2}(0,1)$ and since for each $t \geq 0$ the operator $\mathcal{F}(t): L^{2}(0,1) \rightarrow L^{2}(0,1)$ is a linear bounded operator for which
there exist constants $\Omega_1, \Omega_2>0$ such that \eqref{eq:constant_bound_1_operatorF} and \eqref{eq:constant_bound_2_operatorF} hold, it follows from Theorem 1.2 on page 184 in \cite{Pazy1983:book} that there exists a unique mild solution $y \in C^{0}\left([0, T] ; L^{2}(0,1)\right)$ of the initial value
problem \eqref{eq:initial_value_problem_general_setting_notion_solution}, i.e.,
\begin{equation}\label{eq:mild_solution_general_setting}
y[t]=\exp (-k t) S(t) w_{0}+\int_{0}^{t} \exp (-k(t-s)) S(t-s) \mathcal{F}(s) y[s] ds, \text { for all } t \in[0, T]
\end{equation}
 Theorem 2.2 on page 4 in \cite{Pazy1983:book} implies the existence of constants $M, \omega>0$ such that
the estimate $\|S(t)\| \leq M \exp (\omega t)$ holds for all $t \geq 0 .$ Exploiting the previous estimate in conjunction with  \eqref{eq:mild_solution_general_setting} and \eqref{eq:constant_bound_1_operatorF}, we get for all $t \in[0, T]:$
\begin{equation}\label{eq:estimate_mild_solution}
\|y[t]\| \leq M \exp (-(k-\omega) t)\left\|w_{0}\right\|+\frac{M c}{k-\omega} \max _{0 \leq s \leq T}(\|y[s]\|), \text { for all } t \in[0, T]
\end{equation}
Selecting $k>0$ so that $\frac{M c}{k-\omega}<1$, estimate \eqref{eq:estimate_mild_solution}  implies the following estimate:
\begin{equation}\label{eq:estimate_mild_interms_intitial_cond}
\max _{0 \leq s \leq T}(\|y[s]\|) \leq\left(1-\frac{M c}{k-\omega}\right)^{-1} M\left\|w_{0}\right\|
\end{equation}
Notice that the mild solution $y \in C^{0}\left([0, T] ; L^{2}(0,1)\right)$ of the initial value problem \eqref{eq:initial_value_problem_general_setting_notion_solution} is a mild solution of the inhomogeneous initial value problem
\begin{equation}
\begin{aligned}
&\dot{y}[t]+(A+k I) y[t]=g(t)\\
&y[0]=w_{0}
\end{aligned}
\end{equation}
with $g(t)=\mathcal{F}(t) y[t]$ for $t \in[0, T]$.  Inequalities \eqref{eq:estimate_mild_interms_intitial_cond}  and \eqref{eq:constant_bound_1_operatorF} imply that $g \in L^{p}\left([0, T] ; L^{2}(0,1)\right)$ for every $p \in[1,+\infty)$.
 Since $-A$ is the infinitesimal generator of an analytic semigroup $S(t)$ on $L^{2}(0,1),$ it follows from Theorem 3.1 on page 110 in \cite{Pazy1983:book} that for every $p \in(1,+\infty),$ the mapping $t \rightarrow y[t]$ is locally Hölder continuous on $(0, T]$ with exponent $\frac{p-1}{p} .$ Using \eqref{eq:constant_bound_1_operatorF},\eqref{eq:constant_bound_2_operatorF} and the fact that $g(t)=\mathcal{F}(t) y[t]$ for $t \in[0, T],$ it follows that for every $p \in(1,+\infty),$ the mapping $t \rightarrow g[t]$ is locally Hölder continuous on $(0, T]$ with exponent $\frac{p-1}{p} .$ By virtue of Corollary 3.3 on page 113 in \cite{Pazy1983:book} we conclude that $y[t]$ is the unique classical solution of \eqref{eq:initial_value_problem_general_setting_notion_solution}.

\end{proof}


\bibliographystyle{siamplain}
\bibliography{ETC_heat}
\end{document}